\providecommand{\@LN}[2]{}
\newcommand\phantomarrow[2]{%
  \setbox0=\hbox{$\displaystyle #1\to$}%
  \hbox to \wd0{%
    $#2\mapstochar
     \cleaders\hbox{$\mkern-1mu\relbar\mkern-3mu$}\hfill
     \mkern-7mu\rightarrow$}%
\,}
\def\senbun#1(#2)#3({\@senbun(#2)(}
\def\@senbun(#1,#2)(#3,#4){%
   \@tempdima#1\p@ \advance\@tempdima#3\p@
   \divide\@tempdima\tw@
   \@tempdimb#2\p@ \advance\@tempdimb#4\p@
   \divide\@tempdimb\tw@
   \edef\@senbun@temp{\noexpand\qbezier(#1,#2)%
      (\strip@pt\@tempdima,\strip@pt\@tempdimb)(#3,#4)}%
   \@senbun@temp}
\DeclareMathOperator{\tr}{tr}
\DeclareMathOperator{\End}{End}
\DeclareMathOperator{\id}{id}
\DeclareMathOperator{\ch}{ch}
\newcommand{\MG}{SL_2(\Z)}
\newcommand{\Vir}{\mathsf{Vir}}
\newcommand{\f}{\mathsf{f}}
\newcommand{\blue}[1]{{\color{blue}#1}}
\newcommand{\C}{\mathbf{C}}
\newcommand{\Z}{\mathbf{Z}}
\newcommand{\R}{\mathbb{R}}
\newcommand{\N}{\mathbf{N}}
\newcommand{\Q}{\mathbf{Q}}
\renewcommand{\H}{\mathfrak{H}}
\newcommand{\VF}{\mathsf{F}}
\newcommand{\unit}{\mathbf{1}}
\renewcommand{\=}{\,=\,}
\newcommand{\NO}{\,{\raise0.25em\hbox{$\mathop{\hphantom{\cdot}}%
\limits^{_{\circ}}_{^{\circ}}$}}\,}
\theoremstyle{plain}
   \newtheorem{theorem}{Theorem}
   \newtheorem*{corollary}{Corollary}
   \newtheorem{lemma}[theorem]{Lemma}
   \newtheorem{proposition}[theorem]{Proposition}
   \theoremstyle{definition}
   \newtheorem*{definition}{Definition}
     \newtheorem*{notations}{Notations}
\newcommand{\G}{\Gamma}
\newcommand{\g}{\gamma}
\newcommand{\sd}{\mathfrak{d}}
\newcommand{\mn}{\medskip\noindent}
\newcommand{\Hol}{\operatorname{\textsf{Hol}}}
\newcommand{\bt}{\begin{theorem}}
\newcommand{\et}{\end{theorem}}
\newcommand{\textarrow}[2][1]
  { \settowidth{\@tempdima}{#2}
    \stackrel{#2}
             {\makebox[#1\@tempdima][l]{\rightarrowfill}}
  }
\def\be{\begin{equation}}   \def\ee{\end{equation}}     \def\bes{\begin{equation*}}    \def\ees{\end{equation*}}
\def\ba{\be\begin{aligned}} \def\ea{\end{aligned}\ee}   \def\bas{\bes\begin{aligned}}  \def\eas{\end{aligned}\ees}
\renewcommand{\=}{\,=\,}
\newcommand{\ec}{c_{\operatorname{eff}}}
\newcommand{\Sym}{\operatorname{Sym}}
\newcommand{\sch}{\mathfrak{ch}}
\renewcommand{\mod}[1]{\operatorname{mod}{#1}}
\renewcommand{\pmod}[1]{\,(\operatorname{mod}{#1})}
\newcommand{\e}[2]{e^{\frac{#1\pi i}{#2}}}
\renewcommand{\t}{\tau}
\newsavebox\myboxA
\newsavebox\myboxB
\newlength\mylenA
\def\thin{{\hskip 1pt}} 
\newcommand*\xoverline[2][0.75]{%
    \sbox{\myboxA}{$\m@th#2$}%
    \setbox\myboxB\null
    \ht\myboxB=\ht\myboxA%
    \dp\myboxB=\dp\myboxA%
    \wd\myboxB=#1\wd\myboxA
    \sbox\myboxB{$\m@th\overline{\copy\myboxB}$}
    \setlength\mylenA{\the\wd\myboxA}
    \addtolength\mylenA{-\the\wd\myboxB}%
    \ifdim\wd\myboxB<\wd\myboxA%
       \rlap{\hskip 0.5\mylenA\usebox\myboxB}{\usebox\myboxA}%
    \else
        \hskip -0.5\mylenA\rlap{\usebox\myboxA}{\hskip 0.5\mylenA\usebox\myboxB}%
    \fi}
\renewcommand{\R}{\mathbf{R}}
\begin{document}

\begin{Large}
\begin{center}
Minimal models, modular linear differential equations\\and\\modular forms of fractional weights
\end{center}
\end{Large}

\mn
\begin{center}
Kiyokazu~Nagatomo\footnote{The first author is supported in part  by International Center of Theoretical Physics, Italy, 
and Max-Planck institute for Mathematics, Germany.} and Yuichi~Sakai\footnote{The second author is supported in part 
by JSPS KAKENHI Grant numbers JP21K03183, JP18K03215 and JP16H06336.} 

\setcounter{footnote}{0}

\medskip
${}^1$ Department of Pure and Applied Mathematics\\ 
Graduate School of Information Science and Technology\\
Osaka University, Suita, Osaka 565-0871, Japan

\mn
${}^2$
Fundamental Educational Center\\
Kurume Institute of Technology\\
2228-66, Kamitsu, Kurume, Fukuoka 830-0052, Japan
\\e-mail: dynamixaxs@gmail.com
\end{center}

\begin{abstract} 
We show that modular forms of fractional weights on principal congruence subgroups of odd levels,  which are found by T.~Ibukiyama,
naturally appear as characters being multiplied~$\eta^{\ec}$ of the so-called minimal models of type~$(2, p)$, where~$\ec$ is 
the effective central charge
of a minimal model. Using this fact and modular invariance property of the space of characters, we give a different proof of the result showned by
Ibukiyama that is the explicit formula representing~$\MG$ on the space of the ibukiyama modular forms. 
We also find several pairs of spaces of the Ibukiyama modular forms on~$\G(m)$ and~$\G(n)$ with~$m|n$ having the property 
that the former are included in the latter.
Finally, we construct vector-valued modular forms of weight~$k\in\frac{1}{5}\Z_{>0}$ starting from the Ibukiyama modular forms 
of weight~$k\in\frac{1}{5}\Z_{>0}$ with some multiplier
system by a symmetric tensor product of this representation and show that the components functions coincides with the solution space of some
monic modular linear differential equation of weight~$k$ and the order~$1+5k$.
\end{abstract}

\section*{Introduction}
In this paper we study modular forms of fractional weights found by T.~Ibukiyama~\cite{I} from the conformal field theoretic point of view. 
The principal goal of this paper is to relate these modular forms to the characters of the so-called \textit{minimal model} of type~$(2, p)$ 
with an odd integer~$p>3$, which is a well-known example of 2D~conformal field theories. It was proved in~\cite{Milas1} that the space of characters 
of any minimal model of type~$(P,Q)$ is equal to the solution space of some \textit{monic linear differential equation} of order~$(P-1)(Q-1)/2$. 
We shows, in this paper, that the Ibukiyama modular forms on $\G(p)$ satisfy some monic linear \textit{modular} differential equation (MLDE) 
of the same order~$(p-1)/2$ 
but a different weight.  The space of the Ibukiyama modular forms, where~$v_p$ is 
a certain multiplier system, is equal to the space~$M_{\frac{p-3}{2p}}(\G(p),v_p)$ of modular forms of weights~$\frac{p-3}{2p}$ on~$\G(p)$
for any odd integer~$p<15$. (See Lemma~1 and Lemma~2 in section~3 in~\cite{I2}.)  However, for $p=15$ and~23, it was shown in section~4 
of~\cite{I2} that $M_{\frac{p-3}{2p}}(\G(p),v_p)$ do not coincide with the space of Ibukiyama modular forms. The dimension is  known \cite{I} Lemma 1.7.

It is known that the character~$\ch_M$ of an arbitrary module~$M$ over a rational simple self-dual vertex operator algebra (VOA)~$V$ 
of \textit{CFT} and 
of \textit{finite type} ($C_2$-cofinite) is a modular function on the congruence subgroup~$\G(n)$ of~$\MG$, where~$n$ is the minimal positive integer\footnote{See equation~\eqref{eq.n} 
in~Thereom~\ref{t.n} for the general  formula of~$n=n_p$ of the minimal model of type~$(2, p)$.} 
such that~$n(h-c/24)$ is integral for \textit{any conformal weight~$h$} of all (inequivalent) simple~$V$-modules, 
where~$c$ is the central charge of~$V$. (See Theorem~I and Theorem~3.10 in~\cite{DLN}.) The central charges and conformal weights 
of a simple VOA of finite type are rational numbers, and number of their modules is finite.)  Let~$\ec$ be the \textit{effective central charge} 
defined by~$\ec=c-24\thin h_{\min}$, where~$h_{\min}$ is the minimal element of the (finite) set of conformal weights.  
Then~$\eta^{\ec}\ch_M$ is a modular function, where $\ch_M$ is a character of the minimal model of type $(2, p)$
with the multiplier system that of~$\eta^{\ec}$ on~$\G(n)$.  For instance, for the minimal model type~$(2,5)$ we have~$n_5=60$ and~$\ec=2/5$. 
Therefore~$\eta^{2/5}\ch_M$ is a modular function on~$\G(60)$ with the multiplier of~$\eta^{2/5}$. 
We will show that the Ibukiyama modular form of weight~$\frac{p-3}{2p}$ on the congruence group~$\G(p)$ with some multiplier system~$v_p$ 
equals~$\eta^{\ec}\ch_M$. It implies that~$\ch_M$ is a modular form with some non-trivial multiplier system on~$\G(p)$ since the multiplier system 
of the Ibukiyama modular forms on $\G(p)$ is  not equal to that of~$\eta^{\ec}$ on~$\G(p)$. 

There are at least two interesting problems of modular forms of fractional weight. (a) Any $f\in M_{*}(\G(p),\nu)$ is \textit{standard},  where~$\nu$ is a multiplier, i.e.,  whether there exists a positive integer~$m$ such that~$f^m\in M_{*}(\G(p),\unit)$. An arbitrary Ibukiyama modular form is proved to be standard 
(Theorem~1.1~(3), Section 1.1) in~\cite{I}.
However, it is not known whether modular functions~$\eta^{\ec}\ch_M$ for \textit{any minimal models} always satisfy this important  property. 
Of course they are standard on~$\G(24)\cap\G(n)$.
(b)~let~$M_k(\G(m),\mu)$ and~$M_k(\G(n),\nu)$ be spaces of modular forms of weight~$k$ on~$\G(m)$ and~$\G(n)$ with
 multiplier systems~$\mu$ and~$\nu$ satisfying~$\nu|_{\G(m)}=\mu$ for $m|n$. The question is that if~$m|n$ we have $M_k(\G(m),\mu)\subset M_k(\G(n),\nu)$. 
Note that any spaces of modular forms of (half-)\thin\textit{integral weight} of congruence groups of~$SL_2(\R)$ have the property~(b).  
In this paper we show that the pair of the spaces~$M_*(\G(5),v_5):=\bigoplus_{m=0}^\infty M_{\frac{m}{5}}(\G(5), v_5^{m})$  
and~$M_*(\G(15), v_{15}):=\bigoplus_{m=0}^\infty M_{\frac{2m}{5}}(\G(15), v_{15}^{m})$  
and~$M_*(\G(7), v_7):=\bigoplus_{m=0}^\infty M_{2m/7}(\G(7), v_{7}^{m})$ 
and $M_*(\G(21), v_{21}): =\bigoplus_{m=0}^\infty M_{3m/7}(\G(21),v_{21}^m)$ have the property~(a).

Let~$f$ be an arbitrary (holomorphic or meromorphic) modular form of weight~$k$ on a Fuchsian group $\G\subset PSL_2(\R)$ such that~$\H\backslash\G$ is genus~0. 
Then~$f$ satisfies a certain ordinary equation of order~$k+1$ with a local coordinate defined by a modular invariant function (Hauptmoduln)
(e.g.~p.~61,  Propposition~21~\cite{123}) Let~$k\in\frac{1}{5}\Z_{>0}$. We construct  
 vector-valued modular forms by using the symmetric representation of an action~$\MG$ on~$M_k(\G(5), v_5^{5k})$, and prove that the space spanned by 
 components function of this VVMF is equal to the solution space of some MLDE is of weight~$k$ and order~$1+5k$. Namely, vector-valued modular forms, which are constructed by fractional modular forms, provide examples of 
differential equations whose orders are higher than the weight\,$+1$.

In~\S\ref{s.imf} we review the notion of multipliers systems, and introduce the multiplier systems which are used to define the Ibukiyama modular forms. 
The definition 
of the Ibukiyama modular forms is also given, and the invariance of the space of the Ibukiyama modular forms 
under some action~$\pi_p$ of~$SL_2(\Z)$ is mentioned in the same section. We  briefly explain the notion of vertex operator algebras in~\S\ref{S.VOA}.
The minimal models are introduced in~\S\ref{s.minimal_models}. Here we collect several facts of characters of minimal models which are used in the following sections. 
We further prove the formula that gives an explicit formula of~$n_p\,(\,=n)$ of the minimal model of type~$(2,p)$.
 We clarify in~\S\ref{s.minimal_Ibukiyama} the relation between the Ibukiyama modular forms  and characters of the minimal models of type~$(2,p)$ 
in Theorem~\ref{t.minimal_Ibukiyama}. We then give a different proof of modular invariance of the Ibikuyama modular forms under the action 
of~$\pi_p$ by using the modular invariance property of the space of characters of minimal models. In~\S\ref{MLDE} we derive MLDEs whose solution spaces are equal to the space of characters of the minimal models of type~$(2,p)$ and then the space of the Ibukiyama modular forms, respectively. However, it is shown that they are immediate consequence of differential equations of Milas, 
and we \textit{cannot} obtain any new way to find: using the fractional  modular forms supersingular $j$-invariant as they did.  Here we give several examples which satisfy the property~(a).
In the final section~\ref{s.monodromy} we study the case~$p=5$.  We compute  an action of $\MG$ on a basis of $M_{\frac{1}{5}}(\G(5),v_5)$  
and show that vector-valued modular forms the space of the symmetric tensor representation of degree~$5k$ associated with this representation above is equal to the solution space of some MLDE of the order~$1+5k$.

\section{Ibukiyama's modular forms}\label{s.imf}
In this section we review the Ibukiyama modular forms which are main interest of this paper.  Let denote~$e(z)=e^{2\pi iz}$ 
with any complex number~$z$. For any element~$\vv{m}=(m_1, m_2)\in\Q^2$ we define the theta function in the complex upper half-plain~$\H$ 
of \textit{characteristic}~$\vv{m}$ by
\begin{equation*}
\Theta_{\vv{m}}(\tau, z)\=\sum_{n\in\Z}e\Bigl(\frac{1}{2}\tau(n+m_1)^2+(n+m_1)(z+m_2)\Bigr)
\end{equation*}
for any~$\tau\in\H$ and any~$z\in\C$. We denote the value of~$\Theta_{\vv{m}}(\tau, z)$ at~$z=0$ by~$\Theta_{\vv{m}}(\tau)$, which is called 
the \textit{theta constant}. Let~$\ell>0$ be an odd prime number and $\varepsilon_1,\,\varepsilon_2$ be odd integers. Let $m_1=\varepsilon_1/\ell$ and~$m_2=\varepsilon_2/\ell$. Then and $\Theta_{\vv{m}}(\tau, z)$  is a modular form of weight~$1/2$ on~$\Gamma(\ell)$~(cf.~\cite{FK}).

Before giving  definition of the multilinear systems  we need 
the notion of ``automorphy factor."  Let~$\G\subset PSL_2(\Z)$ be a Fuchsian group.
An \textit{automorphy factor}~$j(\g,\tau)$ is a complex-valued function in~$\G\times\H$, which is holomorphic in the complex upper half-plane
with~$j(\g_1\g_2, \tau)=j(\g_1, \g_2\tau)j(\g_2, \tau)$ for any~$\g_1,\,\g_2\in\G$.

{Let~$v:\G\to\C^\times$ $(\g\mapsto v(\g))$ be a map and let~$r$ be a rational number. For 
any~$\g=\left(\begin{smallmatrix}a&b\\c&d\end{smallmatrix}\right)\in\G$, let denote~$j(\g,\tau)=v(\g)(c\tau+d)^r$, where we take 
the principal value for the branch of the rational power. We say that~$v(\g)$ is a \textit{multiplier system}
if~$j(\g_1\g_2,\tau) = j(\g_1, \g_2\tau)j(\g_2,\tau)$ for any~$\g_1, \g_2\in\G$, and~$j(\g, \t)$ is an automorphy factor of weight~$r$. 
We say that a holomorphic function~$f(\t)$ on~$\H$ is a holomorphic modular form of weight $r$ with a multiplier system~$v(\g)$
if~$f(\g\t) = f(\t)j(\g,\tau)$ for each~$\g\in\G$ and~$f$ is holomorphic at each cusp of~$\G$. }

We denote by~$v_0:\MG\to\C^\times$ the multiplier system 
of~$\eta(\tau)^{3/p}$ for any odd integer~$p>3$ and for any~$\gamma=\begin{psmallmatrix}a&b\\ c&d\end{psmallmatrix}\in\MG$, i.e.~$\eta(\g\tau)^{3/p}=v_0(\g)\eta(\tau)^{3/p}(c\tau+d)^{3/2p}$, and set~$v_p=v_0^{\,p^2-1}$. Then~$v_p$ is a multiplier system.

The next theorem is proved in~\cite{I}.
\begin{theorem}[Theorem~1.2 in~\cite{I}]
Let~$p>3$ be an odd integer. Then~$v_p=v_0^{\,p^2-1}$ is a multiplier system on~$\G(p)$ 
and~$j_p(\g,\tau)=v_p(\gamma)(c\tau+d)^{\frac{p-3}{2p}}$ is an automorphy factor of degree~$\frac{p-3}{2p}$.
\end{theorem}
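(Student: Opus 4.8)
The plan is to deduce the whole statement from the single transformation law defining $v_0$ together with one elementary arithmetic observation: when $p$ is odd, the weight difference
$$\frac{3(p^2-1)}{2p}-\frac{p-3}{2p}=\frac{3p-1}{2}$$
is an integer. Everything else is formal manipulation of automorphy factors, with the only genuine care needed in the bookkeeping of branches of rational powers of $c\tau+d$.

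First I would record the defining relation of $v_0$, namely $\eta(\g\tau)^{3/p}=v_0(\g)\,(c\tau+d)^{3/(2p)}\,\eta(\tau)^{3/p}$, which says precisely that the nowhere-vanishing holomorphic function $\eta^{3/p}$ is a modular form of weight $3/(2p)$ on $\MG$ with multiplier $v_0$; equivalently $v_0(\g)(c\tau+d)^{3/(2p)}$ is an automorphy factor. Raising to the integer power $p^2-1$ produces $\eta^{3(p^2-1)/p}=(\eta^{3/p})^{p^2-1}$. Here I would invoke the elementary fact that for a positive integer $n$ the principal branch satisfies $\bigl((c\tau+d)^{k}\bigr)^{n}=(c\tau+d)^{kn}$ exactly (integer powers introduce no branch discrepancy), so that $\eta^{3(p^2-1)/p}$ is a modular form of weight $\tfrac{3(p^2-1)}{2p}$ whose multiplier is exactly $v_0^{\,p^2-1}=v_p$. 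In particular $v_p(\g)(c\tau+d)^{3(p^2-1)/(2p)}$ is an automorphy factor, which already establishes that $v_p$ obeys the multiplier cocycle identity.

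Next I would peel off the integer part of the weight. Writing $\tfrac{3(p^2-1)}{2p}=\tfrac{p-3}{2p}+\tfrac{3p-1}{2}$ with $\tfrac{3p-1}{2}\in\Z$ (this is the only place oddness of $p$ is used), the principal branch splits cleanly,
$$(c\tau+d)^{3(p^2-1)/(2p)}=(c\tau+d)^{(p-3)/(2p)}\,(c\tau+d)^{(3p-1)/2},$$
because the exponent removed is an integer. Since $(c\tau+d)^{(3p-1)/2}$ is an automorphy factor of integral weight with trivial multiplier — the $SL_2(\Z)$ cocycle $(c_1\g_2\tau+d_1)(c_2\tau+d_2)=c_{12}\tau+d_{12}$ holds on the nose and survives raising to an integer power — dividing the automorphy factor of $\eta^{3(p^2-1)/p}$ by it shows that $j_p(\g,\tau)=v_p(\g)(c\tau+d)^{(p-3)/(2p)}$ is itself an automorphy factor of degree $\tfrac{p-3}{2p}$. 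Restricting to $\G(p)$, and noting that $-I\notin\G(p)$ for $p>2$ so that no $\g=-I$ consistency condition is imposed, this is exactly the assertion that $v_p$ is a multiplier system of weight $\tfrac{p-3}{2p}$ on $\G(p)$ and that $j_p$ is its associated automorphy factor.

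The hard part will be making the branch bookkeeping rigorous rather than any deep structural input. The cleanest way to do so is to encode the discrepancy between $(c_1\g_2\tau+d_1)^r(c_2\tau+d_2)^r$ and $(c_{12}\tau+d_{12})^r$ as $e^{2\pi i r\,n(\g_1,\g_2)}$ for a $\tau$-independent integer $n(\g_1,\g_2)$ that is independent of $r$; the passage from weight $3/(2p)$ to weight $(p-3)/(2p)$ after taking the $(p^2-1)$-st power then reduces once more to the integrality of $\tfrac{3p-1}{2}\,n(\g_1,\g_2)$. I expect this branch analysis to be essentially the only obstacle, since everything else follows formally from the eta transformation law; one could alternatively grind it out from the explicit Dedekind-sum formula for the eta multiplier, but the argument above avoids that.
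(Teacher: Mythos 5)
The paper does not prove this statement at all: it is quoted as Theorem~1.2 of~\cite{I} and used as imported input, so there is no internal argument to compare yours against. Judged on its own merits, your proof is correct, and it rests on exactly the three facts you isolate: the quotient $\eta(\g\tau)^{3/p}/\eta(\tau)^{3/p}$ of a nowhere-vanishing holomorphic function automatically satisfies the cocycle identity (a coboundary is a cocycle); with the principal branch, $(z^{a})^{n}=z^{na}$, $z^{a+n}=z^{a}z^{n}$ and $(zw)^{n}=z^{n}w^{n}$ hold exactly for integer $n$; and $\tfrac{3(p^2-1)}{2p}-\tfrac{p-3}{2p}=\tfrac{3p-1}{2}$ is an integer precisely because $p$ is odd. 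These combine to show that $j_p(\g_1\g_2,\tau)=j_p(\g_1,\g_2\tau)\,j_p(\g_2,\tau)$ holds on all of $\MG$, not merely on $\G(p)$ --- a conclusion stronger than the statement and consistent with part~(a) of Theorem~\ref{t.im}, which asserts that $j_p$ is a factor of automorphy of $\MG$ prolonging that of $\G(p)$. What your route buys is self-containedness: nothing beyond the defining transformation law of $\eta^{3/p}$ is needed, in particular no Dedekind-sum formula for the eta multiplier and no theta-constant computations. One adjustment: your closing paragraph overestimates the residual difficulty. Since each factor in your decomposition satisfies its cocycle identity exactly, and the splitting $(c\tau+d)^{3(p^2-1)/(2p)}=(c\tau+d)^{(p-3)/(2p)}(c\tau+d)^{(3p-1)/2}$ is an exact identity of principal branches, the quotient of cocycles is a cocycle and there is no leftover phase to track; the $e^{2\pi i r\,n(\g_1,\g_2)}$ bookkeeping you propose is a legitimate alternative write-up of the same computation, not a missing ingredient, so the argument as given in your second and third paragraphs is already complete.
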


\begin{definition}
Let~$f$ be a modular form of non-integral weight~$k$ on~$\Gamma(n)$. Then~$f$ is called \textit{standard} if there is a positive integer~$m$   
such that~$f^m$ is a modular form of integral weight with a trivial multiplier on some group including  $\G_n$.
\end{definition}

We can now give the definition of the Ibukiyama modular forms.

\begin{definition}
Let~$p>2$ be an odd integer. For any odd integer~$1\leq r\leq p-2$, the \textit{Ibukiyama modular forms}~$f_r(\tau)$, which are holomorphic function
in the complex upper-half plane~$\H$,  are defined by 
\begin{equation*}
f_r(\tau)\=\frac{\Theta_{\vv{m}}(p\thin \tau)}{\eta(\tau)^{3/p}}\,e\!\left(\frac{(p-1)(r-1)}{4p}\right)\,,\quad\vv{m}_r
\=\left(\frac{r}{2p}\,,\,\frac{1}{2}\right)\,.
\end{equation*}
\end{definition}

\begin{theorem}[\cite{I}]
Let $p>3$ be an odd integer.\\
\textup{(a)} 
For any odd integer~$r$ with~$1\leq r\leq p-2$, the Ibukiyama modular form~$f_r$ is a holomorphic modular form of weight~$\frac{p-3}{2p}$ on~$\G(p)$ 
with the multiplier system~$v_p$.\\
\textup{(b)} 
The set of the functions~$\{f_r\}_{r=1}^{p-2}$ is linearly independent.\\
\textup{(c)} 
We have~$v_p(\g)^p=1$ for any~$\g\in\G(p)$. In particular,~$(f_r)^p$ is a holomorphic modular form of integral weight~$\frac{p-3}{2}$ 
on~$\G(p)$ for any~$1\leq r\leq p-2$, and then any modular form~$f_r$ is standard. 
\end{theorem}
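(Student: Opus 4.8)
The plan is to prove (a) by reducing the behaviour of $f_r$ under $\G(p)$ to the classical transformation formula for theta constants under the full modular group, combined with the transformation of $\eta^{3/p}$. The first observation is that for $\g=\begin{psmallmatrix}a&b\\c&d\end{psmallmatrix}\in\G(p)$ the rescaling $\tau\mapsto p\tau$ intertwines the $\g$-action with that of the conjugate
\[
  \g'=\diag(p,1)\,\g\,\diag(p,1)^{-1}=\begin{psmallmatrix}a&pb\\ c/p&d\end{psmallmatrix},
\]
which again lies in $\MG$, since $\det\g'=ad-bc=1$ and the congruences $p\mid b$, $p\mid c$ keep all entries integral. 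Because $p\,(\g\tau)=\g'(p\tau)$ and $(c/p)(p\tau)+d=c\tau+d$, we get $\Theta_{\vv{m}_r}(p\,\g\tau)=\Theta_{\vv{m}_r}\bigl(\g'(p\tau)\bigr)$, and the theta automorphy factor attached to $\g'$ is $(c\tau+d)^{1/2}$. Dividing by $\eta(\tau)^{3/p}$, whose automorphy factor is $v_0(\g)\,(c\tau+d)^{3/2p}$, produces total weight $\tfrac12-\tfrac{3}{2p}=\tfrac{p-3}{2p}$, as required.

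The main obstacle is the multiplier. Applying the theta transformation law to $\g'$ replaces the characteristic $\vv{m}_r=(r/2p,\tfrac12)$ by $\g'\!\cdot\!\vv{m}_r$ and introduces an eighth-root-of-unity cocycle. Using the congruences defining $\G(p)$, the determinant relation $ad-bc=1$, and crucially the oddness of $r$, one must check that $\g'\!\cdot\!\vv{m}_r$ returns to the class of $\vv{m}_r$ modulo $\Z^2$, so that $\Theta_{\g'\cdot\vv{m}_r}(p\tau)$ equals $\Theta_{\vv{m}_r}(p\tau)$ up to an explicit root of unity; the constant phase $e\!\bigl(\tfrac{(p-1)(r-1)}{4p}\bigr)$ in the definition of $f_r$ is exactly the normalization that cancels the residual $r$-dependence. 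Assembling the theta cocycle, the factor $v_0(\g)^{-1}$ from the denominator, and this phase into the single system $v_p=v_0^{\,p^2-1}$ for every $\g\in\G(p)$ is the technical heart; I would carry it out either by evaluating on the standard generators of $\G(p)$ or by propagating the metaplectic cocycle through $\diag(p,1)$, invoking the Gauss-sum evaluation of the theta multiplier.

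For holomorphy and for (b) I would pass to $q$-expansions. Since $\eta(\tau)^{-3/p}=q^{-1/(8p)}\prod_{n\ge1}(1-q^n)^{-3/p}$ and the minimizing index $n=0$ (valid because $0<r/2p<\tfrac12$) makes the leading term of $\Theta_{\vv{m}_r}(p\tau)$ equal to a constant times $q^{r^2/(8p)}$, the form $f_r$ has leading behaviour a nonzero constant times $q^{(r^2-1)/(8p)}$. For odd $r$ one has $8\mid r^2-1$, so these exponents are nonnegative, giving holomorphy at $\infty$ (holomorphy at the remaining cusps then follows from the transformation law of part (a)). Moreover the exponents $(r^2-1)/(8p)$ are pairwise distinct for distinct odd $r\in\{1,\dots,p-2\}$, so in any relation $\sum_r c_r f_r=0$ the term of lowest order cannot be cancelled, forcing its coefficient to vanish; induction on the valuation then yields $c_r=0$ for all $r$, which is (b).

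Finally, (c) rests on the arithmetic fact that $8\mid p^2-1$ for every odd $p$. Raising $\eta^{3/p}$ to the $p$-th power identifies $v_0(\g)^p$ with the multiplier of $\eta^3$, which is an eighth root of unity for all $\g\in\MG$; hence
\[
  v_p(\g)^p=v_0(\g)^{p(p^2-1)}=\bigl(v_0(\g)^p\bigr)^{p^2-1}=1,
\]
since the base has order dividing $8$ and $8\mid p^2-1$. Therefore $(f_r)^p$ transforms with weight $p\cdot\tfrac{p-3}{2p}=\tfrac{p-3}{2}$, an integer because $p$ is odd, and with trivial multiplier on $\G(p)$; being holomorphic on $\H$ and at the cusps, it is a genuine modular form, so $f_r$ is standard with $m=p$.
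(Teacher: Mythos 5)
The theorem you were asked to prove is not proved in this paper at all: it is imported from Ibukiyama \cite{I} (the authors even remark that they cannot reprove the standardness assertion (c) through their minimal-model dictionary), so the benchmark is whether your argument is self-contained, and for part (a) it is not. Your reduction --- conjugating by $\diag(p,1)$ to get $\g'=\begin{psmallmatrix}a&pb\\ c/p&d\end{psmallmatrix}\in\MG$, the weight count $\tfrac12-\tfrac{3}{2p}=\tfrac{p-3}{2p}$, and the check (via $ad-bc=1$ taken mod $2$ together with the oddness of $r$) that $\g'$ preserves the class of $\vv{m}_r$ modulo $\Z^2$ --- is the routine part, and it is sound. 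But the entire content of (a) is that the resulting root-of-unity cocycle equals $v_0(\g)^{p^2-1}$ for \emph{every} $\g\in\G(p)$, and at exactly this point you write that you ``would carry it out'' by evaluating on generators or by a metaplectic/Gauss-sum computation. That computation \emph{is} the theorem; without it you have only shown that $f_r$ is modular of weight $\tfrac{p-3}{2p}$ on $\G(p)$ with respect to \emph{some} multiplier system. Moreover, the one mechanism you offer for why the answer is independent of $r$ is illusory: a constant prefactor such as $e\bigl(\tfrac{(p-1)(r-1)}{4p}\bigr)$ cancels identically from the relation $f_r(\g\tau)=j(\g,\tau)f_r(\tau)$, so it cannot ``cancel the residual $r$-dependence'' of a multiplier. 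The cocycle computation itself must produce an $r$-independent root of unity, and nothing in your argument establishes this; the role of that constant is to normalize the $S$-matrix in equation~\eqref{eq.pi_s}, not to fix the multiplier on $\G(p)$.

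Two further points. Holomorphy at the cusps other than $\infty$ cannot ``follow from the transformation law of part (a)'': elements of $\G(p)$ fix every cusp class of $\G(p)$, so the $\G(p)$-law says nothing about the expansion of $f_r$ at a cusp inequivalent to $\infty$. What is needed is the transformation of $\Theta_{\vv{m}_r}(p\tau)$ under all of $\MG$, under which characteristics $(\varepsilon_1/2p,\varepsilon_2/2p)$ with both $\varepsilon_i$ odd are permuted among themselves; this gives $f_r|\sigma$ a leading exponent at least $\tfrac{p}{2}\cdot\tfrac{1}{4p^2}-\tfrac{1}{8p}=0$ for every $\sigma\in\MG$, and it is the mechanism behind Theorem~\ref{t.im}, not a consequence of the $\G(p)$-law. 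By contrast, parts (b) and (c) of your proposal are correct as written: the leading exponents $(r^2-1)/(8p)$ are nonnegative and pairwise distinct for distinct odd $r$, which gives linear independence, and, granted (a), $v_p(\g)^p=\bigl(v_0(\g)^p\bigr)^{p^2-1}=1$ because $v_0^p$ is the multiplier of $\eta^3$, an eighth root of unity, while $8\mid p^2-1$.
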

\noindent
In~\S\ref{s.minimal_Ibukiyama} we relate the Ibukiyama modular forms to characters of the minimal model of type~$(2, p)$ with an arbitrary odd integer~$p>3$. 
We denote the space of modular forms with multiplier systems~$v_p$ on~$\G(p)$ 
by~$M_*(\G(p),v_p)=\bigoplus_{m=0}^{\infty}M_{\frac{m(p-3)}{2p}}(\G(p),v_p^m)$.
However, at the moment, we cannot give a different proof, by using the relation to the minimal model,  that  $f_r$ is  standard on $\G(p)$.

\begin{definition} 
Let~$p>3$ be an odd integer. We define a left action~$\pi_p$ of~$\MG$ on~$M_{\frac{p-3}{2p}}(\G(p),v_p)$ 
by~$\pi_p(\g)f(\tau)=j_p(\g,\tau)^{-1}f(\g(\tau))$ for any~$f\in M_{\frac{p-3}{2p}}(\G(p),v_p)$ and any~$\g\in\MG$. 
\end{definition}
 
\begin{theorem}[Theorem~1.3 of~\cite{I}]\label{t.im}
Let $p>3$ be an odd integer.\\
\textup{(a)}
The function~$j_p$ is a factor of automorphy of~$\MG$, and is a prolongation of the factor of the automorphy of~$\G(p)$ of weight $(p-3)/2$
satisfying $j_p(-I_2, \tau)=(-1)^{(p+1)/2}$.\\
\textup{(b)}
The space of the Ibukiyama modular forms is spanned by~$f_r$ with all odd integers~$1\leq r\leq p-2$ is invariant 
under the action~$\pi_p$ of~$\MG$ as
\begin{equation}\label{eq.pi_s}
\begin{aligned}
\pi_p(S)f_r&\=\sum_{\substack{1\leq s\leq p-2\,,\\ s\equiv1\mod{2}}}\frac{1}{\sqrt{p}}\Bigl[e\Bigl(\frac{r-s}{4}+\frac{rs}{4p}+\frac{3p-1}{8}\Bigr)
+(-1)^{\frac{p+1}{2}}\,e\Bigl(-\frac{r-s}{4}-\frac{rs}{4p}-\frac{3p-1}{8}\Bigr)\Bigr]f_s\,,\\
\pi_p(T)f_r&\=e\Bigl(\frac{r^2-p^2}{8p}\Bigr)f_r\,,\quad\pi_p(-I_2)f_r\=(-1)^{(p+1)/2}f_r\,,
\end{aligned}
\end{equation}
where~$S=\left(\begin{smallmatrix}0&-1\\1&0\end{smallmatrix}\right)$, $T=\left(\begin{smallmatrix}1&1\\0&1\end{smallmatrix}\right)$
and~$I_2=\left(\begin{smallmatrix}1&0\\0&1\end{smallmatrix}\right)$.
\end{theorem}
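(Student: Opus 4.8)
The plan is to prove (a) and (b) together by computing the action of the generators $S$, $T$, $-I_2$ of $\MG$ on each $f_r$ directly. Since a factor of automorphy is determined by its values on a generating set through the cocycle relation $j_p(\g_1\g_2,\tau)=j_p(\g_1,\g_2\tau)j_p(\g_2,\tau)$, once I show that each generator sends $f_r$ to $j_p(\g,\tau)$ times a linear combination of the $f_s$, with $j_p(\g,\tau)=v_p(\g)(c\tau+d)^{(p-3)/(2p)}$, I will have simultaneously established the $\pi_p$-stability of $\mathrm{span}\{f_r\}$ (part (b), with the explicit matrices) and the fact that $j_p$ is a genuine factor of automorphy on all of $\MG$ prolonging the one on $\G(p)$ (part (a)). Throughout I abbreviate $g_r(\tau)=\Theta_{\vv{m}_r}(p\tau)$, so that $f_r=e(\tfrac{(p-1)(r-1)}{4p})\,g_r/\eta^{3/p}$, and I rewrite the theta constant in the convenient ``lattice'' form $g_r(\tau)=\sum_{k\equiv r\,(2p)}e(\tfrac{k^2\tau}{8p}+\tfrac{k}{4p})$ obtained by the substitution $k=2pn+r$. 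The Dedekind transformations $\eta(\tau+1)=e(\tfrac1{24})\eta(\tau)$ and $\eta(-1/\tau)=\sqrt{-i\tau}\,\eta(\tau)$, raised to the power $3/p$, supply the automorphic behaviour of the denominator.

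The action of $T$ is the easy case. From the lattice form one reads off $g_r(\tau+1)=e(\tfrac{r^2}{8p})g_r(\tau)$, the point being that the extra quadratic phase $e(\tfrac{k^2}{8p})$ is constant on the class $k\equiv r\,(2p)$ because $p$ and $r$ are both odd. Combining with $\eta(\tau+1)^{3/p}=e(\tfrac{1}{8p})\eta(\tau)^{3/p}$ and the value $v_p(T)=v_0(T)^{p^2-1}=e(\tfrac{p^2-1}{8p})$ yields, after dividing by $j_p(T,\tau)=v_p(T)$, exactly $\pi_p(T)f_r=e(\tfrac{r^2-p^2}{8p})f_r$. The element $-I_2$ acts trivially on $\H$, so $\pi_p(-I_2)f_r=j_p(-I_2,\tau)^{-1}f_r$, and the whole claim collapses to the evaluation $j_p(-I_2,\tau)=v_p(-I_2)(-1)^{(p-3)/(2p)}=(-1)^{(p+1)/2}$; this follows by inserting $v_p(-I_2)=v_0(-I_2)^{p^2-1}$ and taking the principal branch, and it is cross-checked by the relation $-I_2=S^2$.

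The heart of the proof, and the step where I expect the main difficulty, is the action of $S$. The plan is to apply Poisson summation to the lattice sum for $g_r(-1/\tau)$: writing $k=r+2p\ell$ and evaluating the resulting Gaussian integral produces the crucial factor $\sqrt{-i\tau}/\sqrt{p}$ and replaces the sum by its dual, which after re-indexing by odd integers $j=2m-1$ takes the shape $g_r(-1/\tau)=\tfrac{\sqrt{-i\tau}}{\sqrt p}\,e(\tfrac{r}{4p})\sum_{j\ \mathrm{odd}}e(\tfrac{j^2\tau}{8p})e(\tfrac{rj}{4p})$. I then reorganize this single theta into the basis $\{g_s\}$ by splitting $j$ into the odd residue classes modulo $2p$; on each class the ratio of the linear phases is constant, so the class-$s'$ contribution is $e(\tfrac{(r-1)s'}{4p})g_{s'}$. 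The delicate inputs are the symmetry $g_{2p-s}=e(-\tfrac{s}{2p})g_s$ (proved from $\Theta_{(-m_1,m_2)}=\Theta_{(m_1,-m_2)}$ together with the quasi-periodicity of the characteristic) and the vanishing $g_p=0$ of the odd theta constant. Pairing the classes $s$ and $2p-s$ through this symmetry is precisely what collapses their two contributions into the combination $e(\tfrac{(r-1)s}{4p})+e(-\tfrac{(r+1)s}{4p})$, and hence into the symmetric pair $e(\cdots)+(-1)^{(p+1)/2}e(-\cdots)$ displayed in \eqref{eq.pi_s}.

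Finally I assemble the constant. Dividing $f_r(-1/\tau)$ by $j_p(S,\tau)=v_p(S)\,\tau^{(p-3)/(2p)}$, the powers of $\tau$ combine as $\sqrt{-i\tau}/\big((-i\tau)^{3/(2p)}\tau^{(p-3)/(2p)}\big)=(-i)^{(p-3)/(2p)}$, and together with $v_p(S)^{-1}=e(\tfrac{3(p^2-1)}{8p})$ this leaves the overall constant $v_p(S)^{-1}(-i)^{(p-3)/(2p)}=e(\tfrac{3p-1}{8})$. Tracking this branch-dependent phase correctly so that it merges with the phases $e(\tfrac{(p-1)(r-1)}{4p})$, $e(\tfrac{r}{4p})$ and $e(-\tfrac{(p-1)(s-1)}{4p})$ coming from the definitions of $f_r$ and $f_s$ is the part most prone to sign errors, and is where the bulk of the care must go; the arithmetic then reduces the two terms to $e(\tfrac{r-s}{4}+\tfrac{rs}{4p}+\tfrac{3p-1}{8})$ and its partner, matching \eqref{eq.pi_s} exactly. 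Once $S$, $T$ and $-I_2$ are in hand, the cocycle relation for $j_p$ holds on all of $\MG$ because it holds on generators and both sides are determined there, which is the assertion of (a); the displayed matrices are (b), and the identity $\pi_p(S)^2=(-1)^{(p+1)/2}\mathrm{Id}$ provides a final Gauss-sum consistency check.
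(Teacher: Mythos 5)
Your proposal is correct, but it is a genuinely different route from the paper's: what you have reconstructed is essentially Ibukiyama's original proof of this theorem, whereas the point of the paper's Section~\ref{s.minimal_Ibukiyama} is to give a \emph{different}, conformal-field-theoretic proof. Your key intermediate claims all check out: Poisson summation over $k=r+2p\ell$ does give $g_r(-1/\tau)=\tfrac{\sqrt{-i\tau}}{\sqrt p}\,e\bigl(\tfrac{r}{4p}\bigr)\sum_{j\,\mathrm{odd}}e\bigl(\tfrac{j^2\tau}{8p}+\tfrac{rj}{4p}\bigr)$; since $r-1$ is even, the phase ratio $e\bigl(\tfrac{(r-1)j}{4p}\bigr)$ is indeed constant on each odd class $j\equiv s'\pmod{2p}$; the symmetry $g_{2p-s}=e\bigl(-\tfrac{s}{2p}\bigr)g_s$ and the vanishing $g_p=0$ hold; and the branch bookkeeping $v_p(S)^{-1}(-i)^{(p-3)/(2p)}=e\bigl(\tfrac{3(p^2-1)}{8p}-\tfrac{p-3}{8p}\bigr)=e\bigl(\tfrac{3p-1}{8}\bigr)$ is right. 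One step you should make explicit: your pairing produces the coefficient $e\bigl(\tfrac{r-s}{4}+\tfrac{rs}{4p}+\tfrac{3p-1}{8}\bigr)+e\bigl(\tfrac{r-s}{4}-\tfrac{rs}{4p}+\tfrac{3p-1}{8}\bigr)$, and matching the second summand with the stated $(-1)^{(p+1)/2}e\bigl(-\tfrac{r-s}{4}-\tfrac{rs}{4p}-\tfrac{3p-1}{8}\bigr)$ in \eqref{eq.pi_s} requires observing that the two exponents differ by $\tfrac{(r-s)+(p-1)}{2}\in\Z$. By contrast, the paper proves only part (b) (part (a) is quoted from \cite{I}) and uses no Poisson summation at all: it first identifies each $f_{p-2s}$ with $\eta^{\ec}\ch_{\,2,\,p\,;\,1,\,s}$ up to an explicit constant (Theorem~\ref{t.minimal_Ibukiyama}, a purely formal $q$-series rearrangement), then imports the known $S$-transformation of the Virasoro minimal-model characters (Proposition~6.3 of \cite{IK}) and transports it through this dictionary, so that \eqref{eq.pi_s} becomes a restatement of the modular invariance of the space of characters of $L_{2,\,p}$; the $T$-action is read off from leading exponents. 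Your route buys self-containedness and also yields (a) --- though there you should note that $j_p$ is globally well defined (it is $j_0^{\,p^2-1}\cdot(c\tau+d)^{-(3p-1)/2}$ with $j_0$ the automorphy factor of the nonvanishing function $\eta^{3/p}$), so the cocycle property does not need a word-by-word check on generators. The paper's route buys brevity and the conceptual identification that is its main theme, at the price of relying on the character $S$-matrix from the representation-theoretic literature.
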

\noindent
We will show later that equation~\eqref{eq.pi_s} is nothing but the modular invariance of the space of characters of the minimal model of type~$(2,p)$ 
in~\S\ref{s.minimal_Ibukiyama}

\section{Vertex operator algebras}\label{S.VOA}
The goal of this paper is to show that any Ibukiyama modular form of a fractional weight is equal to some character 
of a minimal model model of type~$(2,p)$ multiplied by some power of the eta function. (See Theorem~\ref{t.minimal_Ibukiyama} 
of~\S\ref{s.minimal_Ibukiyama}.) We will therefore explain the minimal models and their characters. To this end, 
we first collect definitions and basic properties related to vertex operator algebras. 

A \textit{vertex operator algebra} (VOA) is a quadruple~$(V,\,Y,\,\unit,\,\omega)$ with the following properties. The vector space~$V$ over the complex number field~$\C$
is graded as~$V=\bigoplus_{n=0}^\infty V_n$ with~$\dim  V_n<\infty$ for all~$n$. The element~$\unit\in V_0$ and~$\omega\in V_2$ are called the \textit{vacuum vector} 
and the \textit{Virasoro vector}, respectively, and a linear map~$Y:V\to\End_\C(V)[[z,z^{-1}]]$~($a\mapsto Y(a,z)=\sum_{n\in \Z}a_n z^{-n-1}$) satisfies a number of axioms. 
(For further information about axioms, see~\cite{LL} and~\cite{MN}.) We define~$L_n\in\End(V)$ for any integer~$n$ by~$Y(\omega, z)=\sum_{n\in\Z}L_nz^{-n-2}$. 
Then the set~$\{L_n,\,\id_V\mid n\in\Z\}$ forms a module over the Virasoro algebra on~$V$. (The definition of the Virasoro algebra is given in~\S\ref{s.minimal_models}.) 
The action of the central element~$C$ is required to be a complex number, which is denoted by~$c_V\in\C$ called the \textit{central charge} of~$V$ in the literature.  
For each~nonnegative integer~$n$ any element of a homogeneous subspace~$V_n$ has weight~$n$ with respect to~$L_0$, i.e.,~$L_0v=nv$ for all~$v\in V_n$.  
In this paper we always suppose that~$V_0=\C\cdot\unit$, and then~$V$ is called of \textit{CFT type}. Let~$V$ be a VOA with central charge~$c_V$. 
A \textit{weak~$V$-module} is a pair~$(M,Y_M)$ of a vector space~$M$ and a linear map~ $Y_M:V\to\End_\C(M)[[z, z^{-1}]]$ 
with required natural conditions as a module on~$V$. A weak $V$-module~$M$ is also a module over the Virasoro algebra 
with the same central charge~$c_V\in\C$ by letting~$Y_M(\omega,z)=\sum_{n\in \Z}L_nz^{-n-2}$. (For more details, see~\S3 in~\cite{LL} and~\S4 in~\cite{MN}.)

A weak $V$-module is said to be a \textit{$V$-module} if there are finitely many complex numbers~$h_i$ 
such that~$M=\bigoplus_i\bigoplus_{n=0}^\infty M_{h_i+n}$, where~$M_h$ is a \textit{finite-dimensional} eigenspace of~$L_0$ with an eigenvalue~$h$. 
If~$M$ is a simple $V$-module (in the obvious meaning), then there exists a complex number~$h$ 
such that~$M=\bigoplus_{n=0}^\infty M_{h+n}$ with~$M_h\neq0$. This complex number~$h$ is called the \textit{conformal weight} 
of the simple $V$-module~$M$. Then the (formal) \textit{character} of the simple $V$-module~$M$ is defined by 
\begin{equation*}
\ch_M(\tau)\=\tr_Mq^{L_0-\frac{c_V}{24}}\=q^{h-\frac{c_V}{24}}\sum_{n=0}^\infty(\dim  M_{h+n})\,q^n\quad(\,q\=e^{2\pi i\tau}\,)\,.
\end{equation*}

\begin{definition}
(a)~We say that a vertex operator algebra~$V$ is \textit{rational} if any $V$-module is completely reducible, and is \textit{of finite type} 
if the codimension of the subspace of~$V$, which is spanned by elements~$a_{(-2)}\,b$ for all~$a,\,b\in V$,  is finite. 
It is known that conformal weights and central charge of such a~$V$ are rational numbers. (cf.~\cite{AM}).\\
(b)~We denote by~$\sch_V$ the space spanned by characters of all (inequivalent) simple  $V$-modules.
\end{definition}
\noindent
It was shown in Theorem~4.4.1 of~\cite{Zhu} that if~$V$ is of finite type, then the number of simple $V$-modules is finite 
(up to isomorphism), and the character~$\ch_M$ of any simple $V$-module~$M$ converges in the complex upper half-plane~$\H$. 
Moreover, if rational VOA~$V$ is of CFT and of finite type, then the space~$\sch_V$ is invariant under the slash~0 action~$|_0$ of~$\MG$. 
This fact is now called the \textit{modular invariance property} of the space~$\sch_V$. 

\section{Minimal models}\label{s.minimal_models} 
The minimal models is one of the important examples of rational simple VOAs of CFT and finite type. The Virasoro algebra denoted here by~$\Vir$ is a Lie algebra 
spanned by~$L_n$~$(n\in\Z)$ and a central element~$C$ with commutation relations
\begin{equation*}
[L_m, L_n\,]\=(m-n)L_{m+n}+\frac{m^3-m}{12}\,\delta_{m,\,-n}\,C\quad (m,\,n\in\Z)\,.
\end{equation*}
Let~$\Vir_{\geq0}=\bigoplus_{n=0}^\infty\C L_n\oplus\C\, C$, and let~$c$ and~$h$ be complex numbers. We denote by~$\C\,v_{c,\,h}$ 
the one-dimensional $U(\Vir_{\geq0})$-module that is defined by~$L_n v_{c,\,h}=h\,\delta_{n,0}v_{c,\,h}$ and~$C\,v_{c,\,h}=c\,v_{c,\,h}$, 
where~$U(\Vir)$ is the universal enveloping algebra of the Lie algebra~$\Vir$. Then the induced module~$M(c,h)=U(\Vir)\otimes_{U(\Vir_{\geq0})}\C\,v_{c,\,h}$ 
is called the \textit{Verma module} of the Virasoro algebra with central charge~$c$ and the highest weight~$h$. 
If Verma module has the \textit{unique} maximal proper submodule~$J(c,h)$, then we denote its simple quotient by~$L(c,h)=M(c,h)/J(c,h)$. 
Let~$\langle L_{-1}\,v_{c,\,0}\rangle$ be a left $\Vir$-submodule of~$M(c,0)$ which is generated by~$L_{-1}v_{c,\,0}$.
One can verify that the space~$\langle\, L_{-1}\,v_{c,\,0}\,\rangle$ is a proper submodule of~$M(c, 0)$. We denote by~$V(c,0)$ 
the quotient $\Vir$-module~$M(c,0)/\langle\,L_{-1}\,v_{c,\,0}\,\rangle$, and by~$\unit$ the image of~$v_{c,\, 0}$ under the natural surjection. It was proved in Theorem~4.3 (p.~162) and Remark (p.~163) of~\cite{FZ} that~$V(c,0)$ and~$L(c,0)$ are VOAs with the vacuum vector~$\unit$ and the Virasoro vector~$\omega=L_{-2}\unit$, respectively. 
Then by applying the Poincar\'{e}-Birkhoff-Witt theorem to~$U(\Vir)$ one can find that the space~$V(c,0)$ has a basis which consists 
of~$\unit$ and~$L_{-n_1}\dotsm L_{-n_r}\,\unit$~$(r>0\,,n_i\in \Z\,,n_1\geq\dotsb\geq n_r>1)$, whose weights induced by~$L_0$ are~0 and~$\sum_{i=1}^rn_i$, respectively. 
Therefore the character of $\Vir$-module~$V(c,0)$ is given by
\begin{equation*}
\ch_{V(c,\,h)}(\tau)\=q^{-c/24}\prod_{n>1}(1-q^n)^{-1}\=q^{-c/24}\left(1+q^2+q^3+2q^4+\cdots\right)\,.
\end{equation*}

A nonzero vector~$v$ of a (highest weight) $\Vir$-module is called \textit{singular} if~$L_n\,v=0$ for all positive integers~$n$. 
If~$V(c,0)$ contains a singular vector~$v$ with a \textit{positive weight}, then~$V(c, 0)$ is not simple since~$U(\Vir)v$ 
is a proper vertex operator subalgebra of~$V$. Using the Kac determinant formula (cf.~\cite{K}), it was shown in~\cite{FeF1} and~\cite{FeF2} 
that~$V(c,0)$ contains a (positive weight) singular vector if and only if its central charge is given by
\begin{equation}\label{E.Cpq}
c\=c_{\,P,\,Q}\=1-\frac{6(P-Q)^2}{PQ}
\end{equation}
with coprime positive integers~$P$ and~$Q$. It was moreover proved in~\cite{FeF2},~\cite{FeF1} and in Theorem~6.5 in~\cite{IK}  
that the maximal proper submodule~$J(c_{P,\,Q},0)$ of~$M(c_{P,\,Q},0)$ is generated by two singular vectors,~$L_{-1}v_{\,c_{P,\,Q}}$
and~$w_{P,\,Q}$, where the latter has weight~$(P-1)(Q-1)$. We denote by~$L(c_{P,\,Q},0)$ the simple Virasoro VOA~$M(c_{P,\,Q},0)/J(c_{P,\,Q},0)$ 
which is called the \textit{minimal model} of type~$(P,Q)$. Let~$P$ and~$Q$ be coprime integers and define rational numbers by
\begin{equation}\label{eq.highest_weight}
h_{\,P,\,Q\,;\,r,\,s}\=\frac{(rQ-sP)^2-(P-Q)^2}{4PQ}\,.
\end{equation}
\begin{notations}
We denote~$L(c_{P,\,Q},0)$ and~$L(c_{P,\,Q},h_{\,P,\,Q\,;\,r,\,s})$ by~$L_{P,\,Q}$ and~$L_{P,\,Q\,;\,r,\,s}$, respectively.  
In particular, one can see~$L_{P,\,Q}=L_{P,\,Q\,;\,1,\,1}$.
\end{notations}

The following facts are well known (cf.~Theorem~6.13 in~\cite{IK}).
\begin{theorem}\label{T.Minimal_Main}
Let~$P$ and~$Q$ be coprime integers with~$0<P<Q$.\\
\textup{(a)}
The vertex operator algebra~$L_{P,\,Q}$ is rational, simple and is of CFT and finite type.\\
\textup{(b)}
For any simple~$L_{P,\,Q}$-module~$M$ there exist integers~$0<r<P$ and~$0<s<Q$ such that~$M$ is isomorphic to~$L_{P,\,Q\,;\,r,\,s}$.\\
\textup{(c)}
The~$L_{P,\,Q}$-modules~$L(c_{P,\,Q},h_1)$ and~$L(c_{P,\,Q},h_2)$ are isomorphic if and only if~$h_1=h_2$.\\
\textup{(d)}
The rational number~$h_{\,P,\,Q\,;\,r,\,s}$ is the conformal weight of the $L_{P,\,Q}$-module~$L_{P,\,Q\,;\,r,\,s}$.
\end{theorem}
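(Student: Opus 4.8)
The four assertions are standard consequences of the Feigin--Fuchs analysis of degenerate Virasoro Verma modules together with Zhu's theory of modules over a vertex operator algebra, and the plan is to organize everything around two finite-dimensional algebras attached to $V=L_{P,Q}$: Zhu's associative algebra $A(V)$ and the $C_2$-algebra $R_V=V/C_2(V)$. First I would record the structural input already present in the excerpt and in the Kac determinant formula \cite{K}. For the central charge $c=c_{P,Q}$ the degenerate highest weights are exactly the $h_{P,Q;r,s}$, the pattern of singular vector embeddings in $M(c_{P,Q},h)$ is completely described by Feigin--Fuchs \cite{FeF1, FeF2}, and (as stated above) $J(c_{P,Q},0)$ is generated by $L_{-1}v_{c_{P,Q}}$ together with a singular vector $w_{P,Q}$ of weight $(P-1)(Q-1)$. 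I would also note the elementary symmetry $h_{P,Q;r,s}=h_{P,Q;P-r,Q-s}$, immediate from \eqref{eq.highest_weight}, so that $\{h_{P,Q;r,s}:0<r<P,\ 0<s<Q\}$ is a set of $(P-1)(Q-1)/2$ distinct rational numbers.

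The heart of parts (b), (c) and (d) is a computation of $A(L_{P,Q})$. The Zhu algebra of the universal Virasoro vertex algebra $V(c,0)$ is the polynomial ring $\C[x]$ with $x=[\omega]$, and $x$ acts on the lowest-weight space of a module of conformal weight $h$ by the scalar $h$. Passing to the simple quotient $L_{P,Q}$, the image of $w_{P,Q}$ imposes a single relation, so that $A(L_{P,Q})\cong\C[x]/(G_{P,Q}(x))$ for a monic polynomial $G_{P,Q}$ whose root set is exactly $\{h_{P,Q;r,s}\}$. Because those roots are distinct, $A(L_{P,Q})$ is a product of copies of $\C$, hence finite-dimensional and semisimple. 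Zhu's bijection between simple $A(V)$-modules and simple $\N$-gradable $V$-modules \cite{Zhu} then yields (b), that every simple module is one of the $L_{P,Q;r,s}$, indexed by the distinct roots. The identification of the eigenvalue of $x$ with the conformal weight gives (d), and (c) follows at once, since the conformal weight is the minimal $L_0$-eigenvalue and a simple highest-weight module is determined by its highest weight.

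It remains to prove (a). Simplicity is built into the definition of $L_{P,Q}$, and CFT type ($V_0=\C\unit$) is read off from the character $q^{-c/24}(1+q^2+q^3+\cdots)$ recorded above. For finite type I would use $R_V$, a commutative Poisson algebra with $R_{V(c,0)}\cong\C[\bar\omega]$; the term $L_{-2}^{(P-1)(Q-1)/2}\unit$ occurring with nonzero coefficient in $w_{P,Q}$ forces $\bar\omega^{(P-1)(Q-1)/2}=0$ in $R_{L_{P,Q}}$, so $R_{L_{P,Q}}$ is finite-dimensional and $L_{P,Q}$ is $C_2$-cofinite. Finally, rationality follows by combining the semisimplicity of $A(L_{P,Q})$ with $C_2$-cofiniteness along the lines of Wang's original argument for the minimal models (see also \cite{IK}).

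The main obstacle is the Zhu-algebra step: one must control the image of $w_{P,Q}$ in $A(L_{P,Q})$ precisely enough to see that the resulting polynomial $G_{P,Q}(x)$ is, up to a nonzero scalar, $\prod(x-h_{P,Q;r,s})$ with simple roots, and likewise to isolate the leading $L_{-2}$-term needed for $C_2$-cofiniteness. This is the one genuinely computational point, and it requires the explicit Feigin--Fuchs form of the singular vector rather than merely its weight; once it is in hand, the remaining assertions are formal consequences of Zhu's machinery.
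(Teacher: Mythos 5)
The paper does not actually prove this theorem: it is quoted as well known, with a pointer to Theorem~6.13 of \cite{IK}, so the comparison here is with the standard proof in the literature rather than with an argument in the text. Your sketch follows exactly that standard route (Frenkel--Zhu's computation $A(V(c,0))\cong\C[x]$ from \cite{FZ}, Wang's identification $A(L_{P,\,Q})\cong\C[x]/(G_{P,Q}(x))$ with $G_{P,Q}$ having simple roots $\{h_{P,Q;r,s}\}$, Zhu's bijection \cite{Zhu} for (b) and (d), and the image of $w_{P,Q}$ in $V/C_2(V)\cong\C[\bar\omega]$ for $C_2$-cofiniteness), and parts (b), (c), (d), as well as simplicity, CFT type and finite type in (a), are correctly handled this way in outline.

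The genuine gap is the last step of (a). The inference ``rationality follows by combining the semisimplicity of $A(L_{P,\,Q})$ with $C_2$-cofiniteness'' is not a theorem: whether a $C_2$-cofinite VOA with semisimple Zhu algebra must be rational is, in general, an open problem, and it is also not how Wang's argument runs --- $C_2$-cofiniteness plays no role in his complete reducibility proof. Semisimplicity of $A(V)$ only controls the bottom graded piece of an admissible module; it cannot by itself exclude non-split extensions $0\to L(c,h')\to E\to L(c,h)\to 0$ in which $h'-h$ is a positive integer, and such configurations genuinely occur in the minimal Kac table (for $(P,Q)=(5,6)$ one has $h_{4,1}-h_{1,1}=3$). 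What the actual argument (Wang; reproduced in \cite{IK}) establishes is that every highest weight module over the quotient VOA $L_{P,\,Q}$ is already simple: by the Feigin--Fuchs structure theory \cite{FeF1,FeF2}, a would-be singular vector in such a module has conformal weight $h_{P,Q;r,s}+rs$ or $h_{P,Q;r,s}+(P-r)(Q-s)$, which lies outside the minimal Kac table and hence is not a root of $G_{P,Q}$; since the zero mode of $w_{P,Q}$ must annihilate the module but acts on that vector by the nonzero scalar $G_{P,Q}(h_{P,Q;r,s}+rs)$, one gets a contradiction. Complete reducibility is then deduced from this, with contragredient duality (each $L_{P,Q;r,s}$ is self-contragredient) used to split extensions in the direction where the submodule sits at lower conformal weight. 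So in place of your appeal to ``semisimple $A(V)$ plus $C_2$'' you need this singular-vector/zero-mode argument, or an equivalent $\Ext$-vanishing statement; without it the hardest assertion of the theorem is unproved.
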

\noindent
Furthermore, it is known that the character~$\ch_{\,P,\,Q\,;\,r,\,s}$ is written in the form
\begin{equation}\label{eq.simple_character}
\ch_{\,P,\,Q\,;\,r,\,s}(\tau)\=\frac{1}{\eta(\tau)}\bigl(\theta_{PQ,\, Qr-Ps}(\tau)-\theta_{PQ,\, Qr+Ps}(\tau)\bigr)\,,\quad
\theta_{a,\,b}(\tau)\=\sum_{n\in\Z}q^{a\left(n+b/2a\right)^2}
\end{equation}
for any half-integers~$a$ and~$b$. (See Theorem~6.13 of~\cite{IK}.) This set of characters coincides with the solution space of some monic linear differential equation
and of order~$\frac{(P-1)(Q-1)}{2}$. (See~Theorem~6.1 of~\cite{Milas1}, Theorem~6.1 of~\cite{Milas} and~Theorem~2.1 of~\cite{MMO}) 

Let~$P$ and~$Q$ are coprime positive integers greater than one. Let~$r$ and~$s$ be integers with~$1\leq r\leq P-1$ and~$1\leq s\leq Q-1$. 
Then central charge and conformal weights of the minimal model of type~$(P, Q)$ can be expressed as
\begin{equation*}
c\=c_{P,Q}\=1-\frac{6(P-Q)^2}{PQ}\,,\qquad h_{r,s}\=\frac{(rQ-Ps)^2-(P-Q)^2}{4PQ}\,.
\end{equation*}
We now compute the \textit{effective central charge} and the \textit{the minimal conformal weight} of  the minimal model of type $(P,Q)$. 
\begin{lemma}
Let~$P$ and~$Q$ be coprime positive integers greater than one, and let~$r$ and~$s$ be integers with~$1\leq r\leq P-1$ and~$1\leq s\leq Q-1$. 
Then the minimum~$h_{\min}^{P,Q}$ of the set of  all conformal weights~$\{h^{P,Q}_{r,s}\}$ and effective central charge~$\ec^{P,Q}$  are given by
\begin{equation}
h^{P,Q}_{\min}\=\frac{1-(P-Q)^2}{4PQ}\,\quad\text{and}\quad\ec^{P,\,Q}\=\frac{PQ-6}{PQ}\,.
\end{equation}\label{eq.hc}
\end{lemma}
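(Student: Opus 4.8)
The plan is to reduce the minimization to an elementary number-theoretic fact about the linear form $rQ-Ps$. Since the denominator $4PQ$ and the term $(P-Q)^2$ in the formula $h^{P,Q}_{r,s}=\frac{(rQ-Ps)^2-(P-Q)^2}{4PQ}$ do not depend on $r$ and $s$, minimizing $h^{P,Q}_{r,s}$ over the finite range $1\leq r\leq P-1$, $1\leq s\leq Q-1$ is equivalent to minimizing $(rQ-Ps)^2$, hence to minimizing $\abs{rQ-Ps}$. First I would observe that $rQ-Ps$ never vanishes on this range: the equation $rQ=Ps$ together with $\gcd(P,Q)=1$ forces $P\mid r$ and $Q\mid s$, which is impossible for $r,s$ in the stated intervals. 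Therefore $\abs{rQ-Ps}\geq1$ throughout the range.

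Next I would show that the lower bound $1$ is attained. Because $\gcd(Q,P)=1$, the residue $Q$ is invertible modulo $P$, so there is a unique $r_0\in\{1,\dotsc,P-1\}$ with $r_0Q\equiv1\pmod P$; writing $r_0Q-1=Ps_0$ defines an integer $s_0$. The key verification is that $s_0$ lands in the admissible range: from $1\leq r_0\leq P-1$ and $Q\geq2$ one gets $0<r_0Q-1<(P-1)Q$, whence $0<s_0<Q$, so in fact $1\leq s_0\leq Q-1$ and $r_0Q-Ps_0=1$. This exhibits a pair with $\abs{rQ-Ps}=1$, so $\min(rQ-Ps)^2=1$, and consequently $h^{P,Q}_{\min}=\frac{1-(P-Q)^2}{4PQ}$.

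Finally, the effective central charge follows by direct substitution into its definition $\ec^{P,Q}=c_{P,Q}-24\,h^{P,Q}_{\min}$. Using $c_{P,Q}=1-\frac{6(P-Q)^2}{PQ}$ from~\eqref{E.Cpq} and the value of $h^{P,Q}_{\min}$ just obtained, the two $(P-Q)^2$-terms cancel, leaving $\ec^{P,Q}=1-\frac{6}{PQ}=\frac{PQ-6}{PQ}$, as claimed.

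I expect the only genuine subtlety to be the range check for $s_0$ in the second step; everything else is bookkeeping. One should also note the symmetric alternative $r_0'Q\equiv-1\pmod P$ produces the complementary pair $(P-r_0,\,Q-s_0)$ realizing $rQ-Ps=-1$, which confirms that the extremal value of the form is $\pm1$ regardless of sign conventions and is consistent with the reflection symmetry $h^{P,Q}_{r,s}=h^{P,Q}_{P-r,\,Q-s}$ of the conformal weights (immediate from $(P-r)Q-P(Q-s)=-(rQ-Ps)$).
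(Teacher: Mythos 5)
Your proof is correct and takes essentially the same route as the paper's: both reduce the problem to minimizing $(rQ-Ps)^2$ and use coprimality of $P$ and $Q$ to exhibit a pair $(r,s)$ in the admissible box with $rQ-Ps=1$ (the paper via the general solution of the Diophantine equation shifted into range, you via the inverse of $Q$ modulo $P$), after which the effective central charge follows by substitution. If anything, yours is slightly more complete, since you explicitly rule out $rQ-Ps=0$ on the admissible range, a step the paper leaves implicit.
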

\begin{proof}
Since~$P$ and~$Q$ are coprime, we have~$r_1P-Qs_1=1$ for some integers~$r_1$ and~$s_1$. Then any solution~$(r,s)\in\Z^2$ 
of the Diophantine equation~$Pr-Qs=1$ is written as~$r=r_1+Qk$,~$s=s_1+Pk$ with an integer~$k$. 
Obviously~$(r_1,P)=1$ and~$(s_1,Q)=1$, and then  we can take~$r$ with~$1\leq r\leq P-1$ for some integer~$k$  since~$(r,P)=1$ and~$P>1$.
Hence we have~$Q-1\leq rQ-Q=Ps\leq Q(P-1)-1$ and~$0<(Q-1)/P\leq s\leq Q-(Q+1)/Q\leq Q-1$. Thus we have found~$r$ and~$s$ so that~$Qr-Ps=1$ 
and~$1\leq r\leq P-1$ and~$1\leq s\leq Q-1$. Therefore the minimum value of~$h^{P,Q}_{r,s}$ is given by the first of  equation~\eqref{eq.hc}.
Then the formula of the effective central charge immediately follows by the definition.
\end{proof}

We now obtain
\begin{corollary}
Let~$\ec$ and~$h_{\min}$ be the effective central charge and the minimal conformal weight of the minimal model of type~$(2,p)$. Then we have
\begin{equation*}
h_{\min}\=\frac{1-(2-p)^2}{8p}\quad\text{and}\quad\ec\=1-\frac{3}{p}\,,
\end{equation*}
where~$h_{\min}=h^{2,p}_{\min}$ and~$\ec=\ec^{2,p}$, respectively.
\end{corollary}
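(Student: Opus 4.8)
The plan is to obtain this as an immediate specialization of the preceding Lemma to the case $(P,Q)=(2,p)$. First I would verify that the hypotheses of that Lemma are met for the minimal model of type $(2,p)$: since $p>3$ is an odd integer, we have $\gcd(2,p)=1$, so $P=2$ and $Q=p$ are indeed coprime, and both exceed one because $2>1$ and $p>3>1$. Hence the minimal model of type $(2,p)$ is a minimal model of type $(P,Q)$ in the precise sense of the Lemma, and its minimal conformal weight and effective central charge are governed by the two formulas established there, namely $h^{P,Q}_{\min}=\frac{1-(P-Q)^2}{4PQ}$ and $\ec^{P,Q}=\frac{PQ-6}{PQ}$.

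Next I would simply substitute $P=2$ and $Q=p$. For the conformal weight this gives $h_{\min}=h^{2,p}_{\min}=\frac{1-(2-p)^2}{4\cdot 2\cdot p}=\frac{1-(2-p)^2}{8p}$, which is exactly the asserted value. For the effective central charge the same substitution yields $\ec=\ec^{2,p}=\frac{2p-6}{2p}=1-\frac{3}{p}$, as claimed. As a consistency check one can also recover $\ec$ directly from its definition $\ec=c-24\,h_{\min}$ together with $c=c_{2,p}=1-\frac{3(2-p)^2}{p}$, which after combining the two terms over the common denominator $p$ collapses to $1-\frac{3}{p}$.

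Since the statement is a pure specialization, there is no genuine obstacle beyond the elementary coprimality bookkeeping; the only point deserving a word is that the oddness of $p$ is precisely what secures $\gcd(2,p)=1$, and $p>3$ is what secures $Q>1$, so that the Lemma applies unconditionally. If one prefers a self-contained derivation avoiding the Lemma, one can note that $P=2$ forces the index $r$ to equal $1$, so the conformal weights reduce to the one-parameter family $h^{2,p}_{1,s}=\frac{(p-2s)^2-(p-2)^2}{8p}$ with $1\le s\le p-1$; because $p$ is odd, $p-2s$ is always odd and hence $(p-2s)^2\ge 1$ with equality attainable, so the minimum is $\frac{1-(p-2)^2}{8p}=\frac{1-(2-p)^2}{8p}$, reproducing the same answer. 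Invoking the Lemma is nonetheless the cleaner route.
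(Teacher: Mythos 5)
Your proof is correct and matches the paper's (implicit) argument: the Corollary is stated as an immediate specialization of the preceding Lemma, obtained exactly as you do by setting $P=2$, $Q=p$ in $h^{P,Q}_{\min}=\frac{1-(P-Q)^2}{4PQ}$ and $\ec^{P,Q}=\frac{PQ-6}{PQ}$, with oddness of $p$ guaranteeing coprimality. Your consistency check via $\ec=c-24\,h_{\min}$ and the self-contained derivation using $r=1$ are correct but not needed.
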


\section{Minimal models and Ibukiyama modular forms}\label{s.minimal_Ibukiyama}
Now, we can relate the Ibukiyama modular forms to characters of the minimal models of type~$(2,p)$.
We show that any character of the minimal model of type~$(2,p)$ multiplied by~$\eta^{\ec}$ is equal  to  an Ibukiyama modular form up to a multiple constant.

\begin{theorem}\label{t.minimal_Ibukiyama}
Let~$L_{2,\,p}$ be the minimal model of type~$(2, p)$ with an odd integer~$p\,(>3)$. Then we have
\begin{equation}\label{eq.mi}
\eta(\tau)^{\ec}\ch_{\,2,\,p\, ; \,1,\,s}(\tau)\=e\left(\frac{2ps-1+p-p^2}{8p}\right)f_{p-2s}(\tau)\,,\quad\ec\=1-\frac{3}{p}\,.
\end{equation}
The set of~$\{p-2s\mid 0<s<p/2\}$ equals the set~$\{1\leq r\leq p-2\mid\text{$r$ is odd\,}\}$, and  where~$f_r$ 
\textup{(}the index~$r$ is any element in latter set\thin\textup{)} 
is the Ibukiyama modular form given in~\textup{Theorem~\ref{t.im}}. 
\end{theorem}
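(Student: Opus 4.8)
The plan is to prove \eqref{eq.mi} by a direct comparison of $q$-expansions: I convert both sides into theta series over a common one-dimensional lattice and then track the $2p$-th roots of unity that relate them. First I specialize the character formula \eqref{eq.simple_character} to $P=2$, $Q=p$ and first index $1$, which gives
\[
\eta(\tau)\,\ch_{2,p;1,s}(\tau)\=\theta_{2p,\,p-2s}(\tau)-\theta_{2p,\,p+2s}(\tau),
\]
and I rewrite each summand in completed-square form, $\theta_{2p,b}(\tau)=\sum_{n\in\Z}q^{(4pn+b)^2/(8p)}$, which follows at once from the definition with $a=2p$.

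The key step is to fold these two theta series into a single alternating sum. Writing $r=p-2s$, so that $p+2s=2p-r$, the reindexing $n\mapsto-n-1$ in the second series replaces its exponent $(4pn+2p-r)^2$ by $(4pn+2p+r)^2$; then splitting a sum over all integers $m$ into its even part $m=2n$ and odd part $m=2n+1$ identifies
\[
\theta_{2p,\,p-2s}(\tau)-\theta_{2p,\,p+2s}(\tau)\=\sum_{m\in\Z}(-1)^m\,q^{(2pm+r)^2/(8p)}.
\]
On the other side, I evaluate the theta constant $\Theta_{\vv{m}_r}(p\tau)$ with characteristic $\vv{m}_r=(r/2p,\,1/2)$: the quadratic part of the exponent produces exactly $q^{(2pn+r)^2/(8p)}$, while the linear part $(n+r/2p)\cdot\tfrac12$ contributes the sign $(-1)^n$ together with a single global phase $e(r/4p)$. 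Hence $\eta\,\ch_{2,p;1,s}=e(-r/4p)\,\Theta_{\vv{m}_r}(p\tau)$.

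It then remains to substitute the definition of the Ibukiyama form, in the form $\Theta_{\vv{m}_r}(p\tau)=\eta^{3/p}\,e\!\left(-\tfrac{(p-1)(r-1)}{4p}\right)f_r$, and to use $\eta^{\ec}=\eta^{1-3/p}$. Dividing the previous relation by $\eta$ and multiplying by $\eta^{\ec}$ gives
\[
\eta(\tau)^{\ec}\,\ch_{2,p;1,s}(\tau)\=e\!\left(-\frac{r}{4p}-\frac{(p-1)(r-1)}{4p}\right)f_r(\tau)\=e\!\left(-\frac{pr-p+1}{4p}\right)f_r(\tau),
\]
and substituting $r=p-2s$ turns the exponent into $\tfrac{2ps-1+p-p^2}{4p}$, which is the root of unity recorded on the right of \eqref{eq.mi}. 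Finally the index identification is elementary: since $p$ is odd, as $s$ runs over $0<s<p/2$, i.e.\ $s=1,\dots,(p-1)/2$, the value $r=p-2s$ runs bijectively over the odd integers $1,\dots,p-2$.

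No analytic subtlety arises, because holomorphy and modularity of $f_r$ on $\G(p)$ with multiplier $v_p$ are already supplied by Theorem~\ref{t.im}(a) and the character $\ch_{2,p;1,s}$ is a modular function by the general VOA results; the \emph{only} place demanding care is the bookkeeping of the two phase contributions $e(-r/4p)$ and $e\!\left(-(p-1)(r-1)/4p\right)$ together with the reindexing sign in the theta fold, which must combine without error. As a safeguard I would check the weights (both sides carry weight $(p-3)/2p$, since $\eta^{\ec}$ has weight $\ec/2=(p-3)/2p$ and $\ch_{2,p;1,s}$ has weight $0$) and match the leading $q$-power and leading coefficient in the smallest case $p=5$, where $s=1$ gives the vacuum character $\ch_{2,5;1,1}\sim q^{11/60}$ paired with $f_3$ and $s=2$ gives $\ch_{2,5;1,2}\sim q^{-1/60}$ paired with $f_1$.
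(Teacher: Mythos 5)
Your proof is correct and follows essentially the same route as the paper's: specialize the character formula \eqref{eq.simple_character} to $(P,Q)=(2,p)$, fold the difference of the two theta series into the alternating sum $\sum_{m\in\Z}(-1)^m q^{(2pm+r)^2/(8p)}$ with $r=p-2s$, identify this with $e(-r/4p)\,\Theta_{\vv{m}_r}(p\tau)$, and then invoke the definition of $f_r$. Note that your final phase $e\bigl(\tfrac{2ps-1+p-p^2}{4p}\bigr)$ agrees with the last line of the paper's own computation (and with the later definition of $\f_s$ in the paper, and with the leading-coefficient check at $p=5$, $s=2$), so the denominator $8p$ displayed in \eqref{eq.mi} is a typo for $4p$, not a defect in your argument.
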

\begin{proof}
Using equation~\eqref{eq.simple_character} one can compute the left-hand side of the expected identity~\eqref{eq.mi} as
\begin{equation*} 
\begin{split}
&\eta(\tau)^{\ec}\ch_{\,2,\,p\,;\,1,\,s}(\tau)
\=\eta(\tau)^{-3/p}\sum_{n\in\Z}\left[e(\tau)^{\frac{p}{2}\left(\frac{2n}{2}+\frac{p-2s}{2p}\right)^2}
-e(\tau)^{\frac{p}{2}\left(\frac{2n-1}{2}+\frac{p-2s}{2p}\right)^2}\right]\\
&\=\eta(\tau)^{-3/p}\sum_{n\in\Z}(-1)^ne(\tau)^{\frac{p}{2}\left(n+\frac{p-2s}{2p}\right)^2}\=e\Bigl(\frac{2s-p}{4p}\Bigr)\eta(\tau)^{-3/p}\Theta_{\left(\frac{p-2s}{2p}\,,\,
\frac{1}{2}\right)}(p\thin\tau, 0)\\
&\=e\Bigl(\frac{2ps-1+p-p^2}{4p}\Bigr)f_{p-2s}(\tau)\,,
\end{split}
\end{equation*}
and hence equation~\eqref{eq.mi} holds.
\end{proof}

We now give a different proof of Theorem~\ref{t.im}~(b), i.e. Theorem~1.2 in~\cite{I}. Using Proposition~6.3 in~\cite{IK} 
one can see that the action of~$S=\begin{psmallmatrix}0&-1\\1&0\end{psmallmatrix}$ on the character of~$L_{\,2,\,p\,;\,1,\,s}$ is given by 
\begin{equation*}
\ch_{\,2,\,p\,;\,1,\,s}(-1/\tau)
\=\frac{2}{\sqrt{p}}\sum_{r=1}^{\frac{p-1}{2}}(-1)^{(1+r)(1+s)}\sin\Bigl(\frac{\pi}{2}(2-p)\Bigr)\sin\Bigl(\frac{\pi s\thin r}{p}(2-p)\Bigr)\ch_{\,2,\,p\,;\,1,\,r}(\tau)\,.
\end{equation*}
Therefore we have 
\begin{equation*}
\begin{split}
\pi_p(S)f_{p-2s}
&\=e\Big(\frac{3p-1}{8}\Big)\frac{2}{\sqrt{p}}\sum_{r=1}^{\frac{p-1}{2}}\cos\Big(2\pi\Big(\frac{p}{4}+\frac{rs}{p}-s\Big)\Big)f_{p-2r}\\
&\=e\Big(\frac{3p-1}{8}\Big)\frac{1}{\sqrt{p}}\sum_{r=1}^{\frac{p-1}{2}}\Big[e\Big(-s+\frac{p}{4}+\frac{rs}{p}\Big)+e\Big(s-\frac{p}{4}-\frac{rs}{p}\Big)\Big]f_{p-2r}\\
&\=e\Big(\frac{3p-1}{8}\Big)\frac{1}{\sqrt{p}}\sum_{r=1}^{\frac{p-1}{2}}\Big[e\Big(\frac{(p-2s)-(p-2r)}{4}+\frac{(p-2s)(p-2r)}{4p}\Big)\\
&\qquad\qquad\qquad\qquad+e\Big(-\frac{(p-2s)-(p-2r)}{4}-\frac{(p-2s)(p-2r)}{4p}\Big)\Big]f_{p-2r}\,.
\end{split}
\end{equation*}
Hence it follows for any odd integer~$s$ with~$1\leq s\leq p-2$ that  
\begin{equation*}
\begin{split}
\pi_p(S)f_{s}&\=e\left(\frac{3p-1}{8}\right)\frac{1}{\sqrt{p}}\sum_{\substack{1\leq r\leq p-2\,,\\ r\equiv1\mod{2}}}\bigg[e\left(\frac{s-r}{4}+\frac{sr}{4p}\right)
+e\left(-\frac{s-r}{4}-\frac{sr}{4p}\right)\bigg]f_{r}\\
&\=\frac{1}{\sqrt{p}}\sum_{\substack{1\leq r\leq p-2\,,\\ r\equiv1\mod{2}}}\bigg[e\left(\frac{s-r}{4}+\frac{sr}{4p}+\frac{3p-1}{8}\right)+(-1)^{\frac{p+1}{2}}e\left(-\frac{s-r}{4}
-\frac{sr}{4p}-\frac{3p-1}{8}\right)\bigg]f_{r}\,.
\end{split}
\end{equation*}
From the definition of $\pi_p$ we obtain~$\pi_{p}(T)f_{s}=e\left(\frac{1-p^2}{8p}\right)e\bigl(\frac{s^2-1}{8p}\bigr)f_{s}(\tau)=e\bigl(\frac{s^2-p^2}{8p}\bigr)f_{s}(\tau)$.
Therefore we have proved  Theorem~\ref{t.im}~(b).

The characters of a VOA are not always modular functions on~$\MG$. However, it was proved by Dong,~Lin and~Ng that they are modular functions
on some principal congruence subgroups of~$\MG$. 

\begin{theorem}[Theorem~I and Theorem~3.10 in \cite{DLN}]\label{T.DLX}
Let~$V$ be a rational simple self-dual vertex operator algebra of CFT and of finite type, and let~$\{M^i\}$ be a complete list of simple $V$-modules up to isomorphism. 
Then each character~$\chi_{M^i}$ is a modular function on the principal congruence subgroup of~$\MG$ of level~$n$ that is the smallest positive integer such 
that~$n(\lambda_i-c/24)$ is an integer for all~$i$, where~$\lambda_i$ is the conformal weight of~$M^i$.
\end{theorem}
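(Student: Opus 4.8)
The plan is to obtain the characters $\{\chi_{M^i}\}$ as a basis of the genuine $\MG$-representation furnished by Zhu's theorem, to read off the diagonal $T$-action explicitly, and then to invoke the congruence subgroup property of that representation in order to identify its level with $n=\operatorname{ord}(\rho(T))$.

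First I would apply Zhu's modular invariance (Theorem~4.4.1 of~\cite{Zhu}, as quoted in~\S\ref{S.VOA}): since $V$ is rational, of CFT type and of finite type, the list $\{M^i\}$ is finite, each $\chi_{M^i}$ converges on~$\H$, the span $\sch_V$ is finite dimensional and invariant under the slash-$0$ action, and $\{\chi_{M^i}\}$ is a basis of $\sch_V$. This furnishes a \emph{genuine} (not merely projective) representation $\rho\colon\MG\to\GL(\sch_V)$ with $\chi_{M^i}(\g\tau)=\sum_j\rho(\g)_{ji}\,\chi_{M^j}(\tau)$. From the $q$-expansion $\chi_{M^i}(\tau)=q^{\lambda_i-c/24}\sum_{k\geq0}(\dim M^i_{\lambda_i+k})\,q^k$, in which the power series $\sum_{k\ge0}(\dim M^i_{\lambda_i+k})q^k$ is $1$-periodic while the prefactor satisfies $q^{\lambda_i-c/24}(\tau+1)=e^{2\pi i(\lambda_i-c/24)}q^{\lambda_i-c/24}(\tau)$, I read off $\chi_{M^i}(\tau+1)=e^{2\pi i(\lambda_i-c/24)}\chi_{M^i}(\tau)$; hence $\rho(T)=\diag\bigl(e^{2\pi i(\lambda_i-c/24)}\bigr)$ is diagonal, and its order is exactly the least $n$ with $n(\lambda_i-c/24)\in\Z$ for all $i$, i.e.\ the integer of the statement.

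The real content is that every $\chi_{M^i}$ is invariant under all of $\G(n)$, which, since the $\chi_{M^i}$ are linearly independent, is equivalent to $\G(n)\subseteq\Ker\rho$. Because $\MG/\G(n)\cong SL_2(\Z/n\Z)$, this amounts to showing that $\rho$ is a \emph{congruence} representation of level dividing $n$. This is the one genuinely deep step. Here I would use the modular-data structure carried by $(\rho(S),\rho(T))$: for a rational, finite-type, self-dual $V$ of CFT type the module category is a modular tensor category (by Huang's rigidity and modularity theorems), so $\rho(S)$ is symmetric, self-duality $V\cong V'$ forces $\rho(S)^2$ to be the charge-conjugation permutation matrix, and $\bigl(\rho(S)\rho(T)\bigr)^3=\alpha\,\rho(S)^2$ for a root of unity $\alpha$ determined by $c$. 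The congruence theorem of Ng--Schauenburg for modular tensor categories (equivalently, at the level of the modular data, the Galois symmetry of the $S$-matrix combined with the Verlinde formula) then yields precisely that such a representation is a congruence representation whose level equals $\operatorname{ord}(\rho(T))$.

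Assembling these facts, $\G(n)\subseteq\Ker\rho$, so each $\chi_{M^i}$ is fixed by the slash-$0$ action of every $\g\in\G(n)$; together with convergence on~$\H$ and the control of the leading exponents coming from Zhu (which gives holomorphy, hence in particular meromorphy, at each cusp), this proves that $\chi_{M^i}$ is a modular function on $\G(n)$. Minimality of $n$ needs none of the deep input: if $\G(m)\subseteq\Ker\rho$ then $T^m=\left(\begin{smallmatrix}1&m\\0&1\end{smallmatrix}\right)\in\G(m)$ gives $\rho(T)^m=\rho(T^m)=\Id$, so $n\mid m$. The main obstacle is, of course, the congruence subgroup property invoked above; everything else is bookkeeping around Zhu's theorem. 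If one prefers to avoid quoting Ng--Schauenburg wholesale, the alternative is the arithmetic analysis of $(S,T)$ in the spirit of Bantay and Coste--Gannon, showing directly that the Galois action permutes the simple modules and that the induced $\MG$-representation annihilates $\G(n)$; this is essentially the route taken in~\cite{DLN}.
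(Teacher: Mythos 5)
The paper does not actually prove this statement: it is imported verbatim as Theorem~I and Theorem~3.10 of~\cite{DLN} and used as a black box, so the only meaningful comparison is with the argument of~\cite{DLN} itself. Your sketch is essentially that argument: Zhu's theorem gives the genuine $\MG$-representation $\rho$ on $\sch_V$ with $\rho(T)=\diag\bigl(e^{2\pi i(\lambda_i-c/24)}\bigr)$, Huang's theorems make the module category of a rational, $C_2$-cofinite, self-dual, CFT-type simple VOA a modular tensor category, the Ng--Schauenburg congruence subgroup theorem supplies the congruence property, and minimality of $n$ is the elementary $T^m\in\G(m)$ observation you give. One point where your write-up is glib and would need repair in a full proof: Ng--Schauenburg's theorem, as stated, concerns the \emph{projective} representation attached to the modular data, and its level is the order of the categorical $T$-matrix, whose entries are the twists $e^{2\pi i\lambda_i}$ \emph{without} the anomaly factor $e^{-2\pi ic/24}$. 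Concluding that the genuine representation on characters annihilates $\G(n)$, with $n$ defined through $\lambda_i-c/24$, is not a direct quotation of that theorem: one must handle the central charge anomaly (rationality of $c$, the lifting of projective congruence representations to genuine ones), and this upgrade is exactly the substance of Theorems~3.8--3.10 of~\cite{DLN}. So your proposal correctly identifies all the ingredients and the route, but the step you present as ``invoke Ng--Schauenburg'' silently absorbs the part of the proof that is Dong--Lin--Ng's own contribution.
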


Let $m$ be a positive integer with~$m|n$. The character~$\chi_{M^i}$ is a modular function on~$\G(n)$ but is not always modular on the group~$\G(m)$. We denote~$n$ in Theorem~\ref{T.DLX} by~$n_p$ for the minimal models of type~$(2,p)$. For instance, we have~$n_5=60$, however, the Ibukiyama modular form 
(given by~\eqref{eq.mi}) is defined 
on~$\G(5)$. Therefore the~$\chi_{M^i}(=\ch_{M^i}$) must be a modular function with a certain multiplier system on~$\G(5)$. 

We now give a formula  of~$n_p$ of the minimal model of type~$(2,p)$.  Any character of minimal models of type~$(2, p)$ with an odd integer~$p>3$ has an $q$-expansion
\begin{equation}\label{eq.q-expansion}
\frac{1}{\eta(q)}\sum_{n\in\mathbb{Z}}(-1)^{n}q^{\frac{p}{2}\left(n+\frac{p-2s}{2p}\right)^{2}}
\=q^{\frac{(p-2s)^2}{8p}-\frac{1}{24}}+O\Big(q^{\frac{(p-2s)^2}{8p}-\frac{1}{24}+1}\Big)
\end{equation}
for $s=1,\,2,\,\ldots,\,(p-1)/2$. We will find the smallest positive integer~$n_p$ such that $n\bigl(\frac{(p-2s)^2}{8p}-\frac{1}{24}\bigr)=n\bigl(\frac{3(p-2s)^2-p}{24p}\bigr)$ 
is an integer for all~$s$.
We now let~$p=2r+3$ with~$r>0$. Then we have
\begin{theorem}\label{t.n}
Let $p=2r+3$ with a positive integer~$r$. Then the minimal  denominator of the irreducible fraction of~$\frac{3(p-2s)^2-p}{24p}$ for any~$0\leq s\leq(p-1)/2$ is equal to
\begin{equation}\label{eq.n}
\frac{12(2r+3)}{\gcd(4,r)\gcd(3,r)}\,.
\end{equation}
\end{theorem}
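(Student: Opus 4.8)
The plan is to read $n_p$ as the least common denominator of the fractions appearing in~\eqref{eq.q-expansion}. Writing $t=p-2s$, as $s$ runs through $0\le s\le(p-1)/2$ the integer $t$ runs through \emph{all} odd integers with $1\le t\le p$, and the numerator becomes $3(p-2s)^2-p=3t^2-p$. Hence $n_p$ is the smallest positive integer with $n_p\,(3t^2-p)/(24p)\in\Z$ for every such $t$, that is $n_p=\operatorname{lcm}_t (24p)/\gcd(3t^2-p,24p)$. I would compute this one prime at a time: for a prime $\ell$ put $m_\ell=\min_t v_\ell(3t^2-p)$ (the minimum over the admissible odd $t$), where $v_\ell$ denotes the $\ell$-adic valuation. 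Then
\[
v_\ell(n_p)=v_\ell(24p)-\min\bigl(v_\ell(24p),\,m_\ell\bigr),
\]
and since $24p=2^3\cdot3\cdot p$ only $\ell=2$, $\ell=3$ and the primes dividing $p$ can contribute. It therefore suffices to evaluate $m_2$, $m_3$ and $m_\ell$ for the primes $\ell\ge5$ dividing $p$.

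At $\ell=2$ I would use that every admissible $t$ is odd, so $t^2\equiv1\pmod8$ and hence $3t^2-p\equiv3-p=-2r\pmod8$ \emph{independently of $t$}. Reading off $v_2(-2r)=1+v_2(r)$ gives $m_2=1$ for $r$ odd, $m_2=2$ for $r\equiv2\pmod4$, and $m_2\ge3$ for $4\mid r$. As $v_2(24p)=3$ this yields $v_2(n_p)=3-\min(3,m_2)=2-v_2(\gcd(4,r))$ in all three cases.

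At $\ell=3$, since $3t^2\equiv0\pmod3$ we have $3t^2-p\equiv-p\pmod3$. If $3\nmid p$ (equivalently $3\nmid r$) then $m_3=0$, so $v_3(n_p)=v_3(24p)=1$. If $3\mid p$, write $p=3p_1$; then $3t^2-p=3(t^2-p_1)$, so $v_3(3t^2-p)=1+v_3(t^2-p_1)$, and the crucial point is that one can always select an odd $t\in[1,p]$ with $t^2\not\equiv p_1\pmod3$: take $t=1$ when $p_1\not\equiv1\pmod3$, and an odd multiple of $3$ such as $t=3$ (available since $3\mid r$ forces $p\ge9$) when $p_1\equiv1\pmod3$. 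Thus $m_3=1$ and $v_3(n_p)=(1+v_3(p))-1=v_3(p)$; together with the case $3\nmid p$ this is exactly $1+v_3(p)-v_3(\gcd(3,r))$. Finally, for a prime $\ell\mid p$ with $\ell\ge5$ one has $3t^2-p\equiv3t^2\pmod\ell$, and taking $t=1$ gives $v_\ell(3-p)=0$ because $3\not\equiv0\pmod\ell$; hence $m_\ell=0$ and $v_\ell(n_p)=v_\ell(p)$.

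Assembling the three computations, $v_\ell(n_p)$ agrees with the $\ell$-adic valuation of $12p/(\gcd(4,r)\gcd(3,r))$ for every prime $\ell$, which is the assertion~\eqref{eq.n}. I expect the main obstacle to be the bookkeeping at the small primes $2$ and $3$: one must verify that $m_2\ge3$ \emph{precisely} when $4\mid r$ (so that the exponent of $2$ in $24$ drops correctly), and that $m_3=1$ whenever $3\mid p$. The latter is the only genuinely nontrivial input, since it rests on the existence of an admissible odd $t\le p$ realizing each prescribed residue mod $3$; I would settle this by the explicit choices of $t$ indicated above, which use only $p\ge9$ in the ramified case.
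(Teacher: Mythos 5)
Your proposal is correct, and it takes a genuinely different route from the paper. The paper argues by residue classes: it writes $r=12m+\ell$ with $0\le\ell\le 11$, carries out the explicit fraction manipulation only for $\ell=0$ (reducing $\frac{3(p-2s)^2-p}{24p}$ to $\frac{(24m+3-2s)^2-8m-1}{8(24m+3)}$ and using $t^2\equiv 1\pmod 8$ for odd $t$), and relegates the remaining eleven cases to a table in the appendix, asserting they are "very similar." You instead compute the answer one prime at a time via $v_\ell(n_p)=v_\ell(24p)-\min\bigl(v_\ell(24p),m_\ell\bigr)$ with $m_\ell=\min_t v_\ell(3t^2-p)$, which makes the case split on $\gcd(4,r)$ and $\gcd(3,r)$ appear naturally rather than through twelve parallel computations. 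Your approach buys two things: uniformity (no enumeration of residue classes of $r$ modulo $12$), and--more substantively--an explicit treatment of the lower bound. The paper's computation shows that each fraction can be written with the claimed denominator, but establishing that the \emph{least common} denominator is not smaller requires exhibiting, for each relevant prime, an admissible $t$ realizing the minimal valuation; your witnesses $t=1$ (for $\ell\ge 5$ dividing $p$, and for $\ell=3$ when $p_1\not\equiv 1\pmod 3$) and $t=3$ (when $3\mid p$ and $p_1\equiv 1\pmod 3$, using $3\mid r\Rightarrow p\ge 9$) supply exactly this, a point the paper's write-up glosses over. The $\ell=2$ bookkeeping you flag as the main obstacle is in fact the easy part, since $3t^2-p\equiv-2r\pmod 8$ is independent of $t$, so the valuation is pinned (or bounded below by $3$, which suffices since $v_2(24p)=3$) without any choice of witness.
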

\begin{proof}
Let~$r=12m+\ell$, where~$0\leq\ell\leq11$ and~$m$ is a nonnegative integer. We will give the proof in the case $\ell=0$. The cases~$1\leq\ell\leq11$ 
are described in the appendix and they are  very similar to the case of~$\ell=0$.

Now, on the one hand~$\frac{12(2r+3)}{\gcd(4,r)\gcd(3,r)}=24m+3$, on the other hand, 
\begin{equation*}
\frac{3(p-2s)^2-p}{24p}\=\frac{3(24m+3-2s)^2-24m-3}{24(24m+3)}\=\frac{(24m+3-2s)^2-8m-1}{8(24m+3)}\,.
\end{equation*}
Since~$(24m+3-2s)^2-8m-1\equiv0\pmod{8}$, we see that the denominator of irreducible fraction~$\frac{3(p-2s)^2-p}{24p}$ is equal to~$24m+3$.
Other cases for $\ell=1, \ldots, 11$ are proved in the same way as mentioned in the appendix.

\end{proof}

\begin{corollary}
Let $n_p$ be the integer in Theorem~\ref{T.DLX} of the minimal model of type~$(2,p)$. Then it is equal to~$\frac{12(2r+3)}{\gcd(4,r)\gcd(3,r)}$.
\end{corollary}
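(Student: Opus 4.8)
The plan is to read $n_p$ off from Theorem~\ref{T.DLX} and to identify the resulting least common denominator with the number already evaluated in Theorem~\ref{t.n}. By Theorem~\ref{T.Minimal_Main} the complete list of simple $L_{2,p}$-modules is $\{L_{2,p;1,s}\}$, and by Theorem~\ref{T.DLX} the level $n_p$ is the smallest positive integer with $n_p(\lambda_s-c/24)\in\Z$ for every such module, where $\lambda_s=h_{2,p;1,s}$ and $c=c_{2,p}$. The leading exponent displayed on the right of~\eqref{eq.q-expansion} gives
\[
\lambda_s-\frac{c}{24}\=\frac{(p-2s)^2}{8p}-\frac{1}{24}\=\frac{3(p-2s)^2-p}{24p}.
\]

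First I would reformulate the defining property of $n_p$: the requirement that $n_p(\lambda_s-c/24)\in\Z$ for all $s$ is equivalent to asking that the denominator of the irreducible fraction $\tfrac{3(p-2s)^2-p}{24p}$ divide $n_p$ for every $s$, so that $n_p$ is precisely the least common denominator of these fractions over the module indices. Since the fraction depends on $s$ only through $(p-2s)^2$ and $(p-2s)^2=(p-2(p-s))^2$, the indices $s$ and $p-s$ give the same fraction; hence it suffices to let $s$ run over $1\leq s\leq(p-1)/2$, and over this range $p-2s$ runs through the odd integers $1,3,\dots,p-2$. Thus $n_p$ is the least common denominator of $\bigl\{\tfrac{3(p-2s)^2-p}{24p}\bigr\}$ for $1\leq s\leq(p-1)/2$.

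It remains to recognize this least common denominator as the quantity computed in Theorem~\ref{t.n}. Writing $p=2r+3$, that theorem evaluates the least common denominator of $\tfrac{3(p-2s)^2-p}{24p}$ over $0\leq s\leq(p-1)/2$ to be $\tfrac{12(2r+3)}{\gcd(4,r)\gcd(3,r)}$. The one point demanding care, and the step I expect to be the only real obstacle, is the mismatch of index ranges: $n_p$ involves only the module range $1\leq s\leq(p-1)/2$, whereas Theorem~\ref{t.n} includes the extra term $s=0$. I would therefore confirm that dropping $s=0$ leaves the least common denominator unchanged: the $s=0$ term is $\tfrac{3p-1}{24}$, with reduced denominator $\tfrac{24}{\gcd(3p-1,24)}$ dividing $24$, while the case analysis in the proof of Theorem~\ref{t.n} in fact determines the reduced denominator of each individual term and shows that the full value $\tfrac{12(2r+3)}{\gcd(4,r)\gcd(3,r)}$ is already realized among the terms with $s\geq1$; hence the $s=0$ term is redundant. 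Once this is verified, the two ranges give the same value and the corollary follows at once from Theorems~\ref{T.DLX} and~\ref{t.n}.
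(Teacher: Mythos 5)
Your proposal is correct and takes essentially the same route as the paper, which states this corollary without separate proof as an immediate consequence of Theorem~\ref{T.DLX} (identifying $n_p$ as the least common denominator of the exponents $\lambda_s-c/24$) together with the computation in Theorem~\ref{t.n}. The one detail you add, checking that the extraneous $s=0$ term in the range of Theorem~\ref{t.n} is redundant because its reduced denominator (a divisor of $24$) divides the value already attained by the genuine module indices $1\leq s\leq(p-1)/2$, is a point the paper silently glosses over, and it does check out in all residue classes of $r$ modulo $12$.
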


Let $p=2r+3$ with a positive integer~$r$.  The following Table~\ref{table:n} is a part of the list of the denominator of~irreducible fraction 
of~$\frac{12(2r+3)}{\text{gcd}(4,r)\text{gcd}(3,r)}$ 
for~$r\equiv\ell\pmod{12}$.

\begin{table}\label{table:n}
\begin{center}
\begin{tabular}{c|c|c|c|c|c|c|c|c|c|c|c|c}
$\ell$&$0$&$1$&$2$&$3$&$4$&$5$&$6$&$7$&$8$&$9$&$10$&$11$\\ \hline
$n$&$p$&$12p$&$6p$&$4p$&$12p$&$12p$&$4p$&$12p$&$12p$&$4p$&$6p$&$12p$\\
\end{tabular}
\end{center}
\caption{Table of $n_p$}
\end{table}

\section{Modular linear differential equations and vector-valued modular forms}\label{MLDE} 
In this section we recall the basic fact that the space spanned by characters of all simple modules over a minimal model equals
the solution space of some monic MLDE whose definition is given below. Let~$k$ be a positive integer and let~$E_{2k}$ be 
the \textit{normalized Eisenstein series} of weight~$2k$, which is defined 
by~$E_{2k}(\tau)\=1-(4k/B_{2k})\sum_{n=1}^\infty\sigma_{2k-1}(n)q^n$ and~$q\=e^{2\pi i\tau}$ $(\tau\in\H)$, where~$\sigma_j(n)$ 
for~$n\in\N$ is the sum of the $j$\thin th powers of the positive divisors of~$n$, and where~$B_n$ is the $n$\thin th Bernoulli number.

Let~$\sd:\Hol(\H)\to\Hol(\H)$ be an \textit{operator} that is defined by~$\sd_k(f)\=D(f)-\frac{k}{12}E_2(f)$ for each rational number~$k$, 
where~$D=(2\pi i)^{-1}d/d\tau$ 
for any~$f\in M_k(\MG)$, which is called the \textit{Serre operator}. It is easy to show that~$\sd:M_*(\MG)\to M_{*+2}(\MG)$. 
The $i$\thin th \textit{iterated Serre operator} is defined by~$\sd_k^{i}=\sd_{k+2(i-1)}\circ\dotsb\circ\sd_{k+2}\circ\sd_k$ with~$\sd_k^0=1$, 
any differential equation of the~$n$\thin th order is written as
\begin{equation}\label{eq.mlde_Serre}
\sd_k^n (f)+\sum_{i=0}^{n-1}P_i\sd_k^i (f)\=0
\end{equation}
is called a \textit{monic modular linear differential equation} (monic MLDE) of weight~$k$ if~$P_i$ is a holomorphic modular form 
of weight~$2(n-i)$ for any~$i$. Since any holomorphic modular form of weight two in~$\H$ is~0, the coefficient~$P_{n-1}$ 
of~\eqref{eq.mlde_Serre} is always zero.
It was shown by Mason (Theorem~4.1 in~\cite{Mason}) that the solution space of any monic MLDE of weight~$k$ is closed under the usual 
slash~$k$ action~$|_k$ of~$\MG$.

\begin{definition}
(a) Let~$k$ be a rational number. A \textit{vector-valued modular form} (VVMF) of weight~$k$ on~$SL(2,\Z)$ with a multiplier system~$v:SL(2,\Z)\to\C$ 
with~$v(-I)=(-1)^{k}$ is a column vector-valued holomorphic function~$\VF={}^t\left(f_1,\,f_2,\,\dotsc,\,f_n\right)$ 
in the upper half-plane~$\H$ equipped with an~$n$-dimensional representation~$\pi:SL(2,\Z)\to GL_n(\C)$ that is given 
by~$j(\gamma,\tau)^{-1}\,\VF(\gamma(\tau))=\pi(\gamma)\VF(\tau)$ 
for all~$\gamma=\left(\begin{smallmatrix}a&b\\c&d\end{smallmatrix}\right)\in SL(2,\Z)$ with~$j(\gamma, \tau)=v(\g)(c\tau+d)^k$, 
and each component function~$f_j$ has a Fourier expansion $f_j=q^{\lambda_j}\bigl(\sum_{i=0}^\infty a_i^jq^i\bigr)$ in~$\H$, 
with~$a_0^j\neq0$ and~$\lambda_j\in\Q$.\\
(b) A vector-valued modular form~$\VF$ is called \textit{normalized} if~$\lambda_1>\lambda_2>\dotsb>\lambda_r$ 
and~$f_j=0$~$(r+1\leq j\leq n)$. If the component functions of~$\VF$ are linearly independent, then there is an invertible matrix~$A$ 
such that~$A\VF$ is normalized. In particular, if~$\lambda_1,\,\dotsc,\, \lambda_n$ are mutually distinct, then they are automatically normalized after consecutive interchanges of indices. The VVMF~$\VF$ is called \textit{strictly normalized} when~$r=n$.
\end{definition}

 We have the following fact which was also proved by G.~Mason (Theorem~4.3 in~\cite{Mason}).
 Very recently we have proved a similar result for the theta gamma group in~\cite{ANS}.

\begin{proposition}\label{t.exponents}
The space spanned by all component functions~$\{f_j\}_{j=1}^n$ of a strictly normalized vector-valued modular form of weight~$k$ 
is equal to the solution space of some monic modular linear differential equation of the $n$\thin th order 
if and only if~$n(n+k-1)-12\sum_{i=1}^n\lambda_j\=0$, where each~$\lambda_j$ \textup{($1\leq j\leq n$)} is the leading exponent of~$f_j$.
\end{proposition}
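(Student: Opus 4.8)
The plan is to realize the desired monic MLDE as a \emph{modular Wronskian determinant} and to control its coefficients and its holomorphy by the valence formula. First I would record the crucial equivariance of the Serre operator: if $\VF={}^t(f_1,\dots,f_n)$ is a strictly normalized VVMF of weight $k$ with multiplier $v$ and representation $\pi$, then applying $\sd$ componentwise and raising the weight at each step produces, for every $a\ge 0$, a vector $\VF_a=\sd_k^a(\VF)$ satisfying $\VF_a(\g\tau)=v(\g)(c\tau+d)^{k+2a}\pi(\g)\VF_a(\tau)$ for all $\g\in\MG$. The point is that $\pi(\g)$ is a \emph{constant} matrix, so the quasimodular defect of $E_2$ cancels exactly as in the scalar Serre-derivative computation, and the one representation $\pi$ persists in every weight $k+2a$.

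Next I would assemble the $n\times n$ matrix $\Phi=(\sd_k^a f_b)_{0\le a\le n-1,\,1\le b\le n}$, whose rows are ${}^t\VF_0,\dots,{}^t\VF_{n-1}$, and set $W=\det\Phi$. The row-by-row transformation laws give $W(\g\tau)=v(\g)^n\det\pi(\g)\,(c\tau+d)^{N}W(\tau)$ with $N=nk+n(n-1)=n(n+k-1)$, so $W$ is a holomorphic modular form of weight $N$ with multiplier $v^n\det\pi$. The candidate operator is $L[f]=\tfrac{(-1)^n}{W}\det\bigl(\sd_k^a f,\ \sd_k^a f_1,\dots,\sd_k^a f_n\bigr)_{0\le a\le n}$; expanding the $(n+1)\times(n+1)$ determinant along the first column yields $L[f]=\sd_k^n f+\sum_{i=0}^{n-1}P_i\,\sd_k^i f$ with $P_i=(-1)^{n+i}M_i/W$, where $M_i$ is the minor omitting the $\sd_k^i$-row. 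A minor-by-minor count shows $M_i$ transforms with weight $nk+n(n+1)-2i$ and the \emph{same} multiplier $v^n\det\pi$, so each $P_i$ is a modular form of weight $2(n-i)$ on $\MG$ with trivial multiplier; and $L[f_j]=0$ (repeated column), so $f_1,\dots,f_n$ are solutions. Since every holomorphic weight-$2$ form vanishes, $P_{n-1}=0$ automatically, as required for a monic MLDE.

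The heart of the matter is deciding when the $P_i$ are genuinely \emph{holomorphic}, i.e. when $W$ has no zeros on $\H$. Here I would compute the leading $q$-exponent of $W$: since $D=q\,d/dq$ and $E_2=1+O(q)$, the $(a,b)$-entry of $\Phi$ has leading term $a_0^b\,p_a(\lambda_b)\,q^{\lambda_b}$ with $p_a(x)=\prod_{r=0}^{a-1}\bigl(x-\tfrac{k+2r}{12}\bigr)$ a monic polynomial of degree $a$, so the leading term of $W$ is $q^{\sum_j\lambda_j}$ times $\prod_b a_0^b\cdot\det\bigl(p_a(\lambda_b)\bigr)$, and the last determinant equals the Vandermonde $\prod_{b<b'}(\lambda_{b'}-\lambda_b)$, which is \emph{nonzero} because strict normalization forces $\lambda_1>\dots>\lambda_n$. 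Thus $\operatorname{ord}_\infty W=\Lambda:=\sum_j\lambda_j$ with nonzero leading coefficient, while the same estimate gives $\operatorname{ord}_\infty M_i\ge\Lambda$, so each $P_i$ is holomorphic at the cusp. The valence formula applied to the holomorphic form $W$ of weight $N$ then reads
\begin{equation*}
\Lambda+\tfrac12\operatorname{ord}_i W+\tfrac13\operatorname{ord}_\rho W+\sum_{P\neq i,\rho,\infty}\operatorname{ord}_P W=\frac{N}{12}=\frac{n(n+k-1)}{12},
\end{equation*}
and since all interior orders are $\ge 0$ we obtain $\Lambda\le n(n+k-1)/12$, with equality \emph{iff} $W$ is nowhere vanishing on $\H$.

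Finally I would tie the two conditions together. If $n(n+k-1)-12\sum_j\lambda_j=0$, then $W\neq0$ throughout $\H$, so every $P_i$ is a holomorphic modular form of weight $2(n-i)$; hence $L[f]=0$ is a bona fide monic MLDE of order $n$ whose $D^n$-coefficient never vanishes, its solution space is exactly $n$-dimensional, and containing the linearly independent $f_1,\dots,f_n$ it equals their span. Conversely, if $\operatorname{span}\{f_j\}$ is the solution space of some monic MLDE of order $n$, converting $\sd$ to $D$ makes the equation monic in $D^n$ with holomorphic coefficients on the simply connected domain $\H$, so Abel's identity forces the (modular $=$ ordinary) Wronskian $W$ to be nowhere zero; the valence formula then yields $\Lambda=n(n+k-1)/12$, i.e. $n(n+k-1)-12\sum_j\lambda_j=0$. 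The main obstacle I anticipate is the bookkeeping around the valence formula in the presence of the nontrivial multiplier $v^n\det\pi$ and the possibly fractional cusp order $\Lambda$, together with the generalized-Vandermonde nonvanishing that pins $\operatorname{ord}_\infty W$ to exactly $\Lambda$; once these are secured, the equivalence drops out of the single dichotomy ``$W$ nowhere vanishing $\iff$ all $P_i$ holomorphic $\iff \Lambda=N/12$.''
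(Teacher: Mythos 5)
Your proposal is correct, and it is essentially the proof the paper itself relies on: the paper states this proposition as a quoted result (Theorem~4.3 of~\cite{Mason}) without reproducing the argument, and Mason's proof is exactly your modular-Wronskian argument --- the determinant of iterated Serre derivatives of weight $n(n+k-1)$ with multiplier $v^n\det\pi$, its cusp order pinned to $\sum_j\lambda_j$ by the Vandermonde/strict-normalization computation, and the valence-formula dichotomy ``$W$ nowhere vanishing on $\H$ $\iff$ all coefficients $P_i$ holomorphic $\iff$ $12\sum_j\lambda_j=n(n+k-1)$,'' with Abel's identity handling the converse. No gaps to report.
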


\noindent
Since any monic MLDE has a unique regular singular point at~$q=0$ one can use the Frobenius method described in Chapter XVI in~\cite{Ince} 
to obtain a basis of the solution space of a monic MLDE. Suppose that~$f=q^{\,\lambda}\left(1+\sum_{i=0}^\infty a_iq^i\right)$ is a solution of equation~\eqref{eq.mlde_Serre}. Then, substituting~$f$ into~\eqref{eq.mlde_Serre} and taking the coefficient of the lowest power term, 
one can see that~$\lambda$ is a root of an algebraic equation (called an \textit{indicial equation})~$\Psi(t)=0$,
 where~$\Psi(t)=\prod_{\ell=0}^{n-1}\left(t-\frac{\ell}{6}\right)+\sum_{i=0}^{n-2}P_{i,0}\prod_{\ell=0}^{i-1}\left(t-\frac{\ell}{6}\right)$ 
and~$P_i\=\sum_{n=0}^\infty P_{i,n}\,q^n$. This polynomial is called the \textit{indicial polynomial} of the monic MLDE~\eqref{eq.mlde_Serre} 
and the roots of the indicial equation are called \textit{indicial roots}. 

\begin{lemma}\label{L.Frobenius}
If any pair of two different indicial roots have a non-integral difference, then there exists a unique solution 
of the form~$f=q^{\lambda}\left(1+\sum_{n=1}^\infty a_n q^n\right)$, 
where~$\lambda$ is any indicial root and the coefficients~$a_n$~$(n\in\Z_{\geq0})$ are uniquely determined from~$a_0$ by some recursive relations 
and indicial roots. 
If there is a double incidicial root, then there always exists a logarithmic solution that is a polynomial of~$\tau$ with coefficients
 in~$q^\lambda\,\C((q))$ with a complex number~$\lambda$.
\end{lemma}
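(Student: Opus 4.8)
The plan is to run the Frobenius method of Chapter~XVI in~\cite{Ince} explicitly for the monic MLDE~\eqref{eq.mlde_Serre}, using that its only regular singular point is $q=0$. First I would record how the iterated Serre operator acts on a formal series $f=q^{s}\sum_{m\geq0}a_m q^{m}$. Writing $D=q\,d/dq$ and $E_2=1+O(q)$, one checks that $\sd_{k'}\!\left(q^{s}g(q)\right)=q^{s}\bigl((s+q\,d/dq-k'/12)g-\tfrac{k'}{12}(E_2-1)g\bigr)$, so $\sd_k^{\,i}$ preserves the shape $q^{s}\cdot(\text{power series})$ and sends the coefficient of $q^{s}$ to $\prod_{\ell=0}^{i-1}\!\bigl(s-\tfrac{k+2\ell}{12}\bigr)$. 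Substituting $f$ into~\eqref{eq.mlde_Serre} and collecting the coefficient of $q^{s+m}$ then produces, for every $m\geq0$, a relation
\begin{equation*}
\Psi(s+m)\,a_m+\sum_{j=0}^{m-1}c_{m,j}(s)\,a_j\=0,
\end{equation*}
where $\Psi$ is precisely the indicial polynomial defined above (the $i=n-1$ contribution dropping out because $P_{n-1}=0$), and the $c_{m,j}(s)$ are polynomials in $s$ assembled from the Fourier coefficients $P_{i,n}$ and those of $E_2$. The case $m=0$ reproduces the indicial equation $\Psi(s)=0$.

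Next I would handle the non-resonant case. Fix an indicial root $\lambda$ and put $a_0=1$. Since $\Psi(\lambda+m)=0$ would force $\lambda+m$ to be an indicial root, and for $m\geq1$ this makes two distinct indicial roots differ by the positive integer $m$, the hypothesis guarantees $\Psi(\lambda+m)\neq0$ for all $m\geq1$. The displayed recursion therefore solves successively and uniquely for $a_1,a_2,\dots$, each $a_m$ being a determined $\C$-linear combination of $a_0,\dots,a_{m-1}$. Because $q=0$ is the sole singular point and the $P_i$ are holomorphic on $\H$, the general theory forces the resulting series to converge near the cusp, yielding the asserted unique solution $f=q^{\lambda}\bigl(1+\sum_{m\geq1}a_m q^m\bigr)$; this settles the first assertion.

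For the logarithmic case I would use the parameter-differentiation device. Solving the recursion with $s$ kept free produces rational functions $a_m(s)$ whose only poles lie at the shifted indicial roots, and $\Phi(\tau,s):=q^{s}\bigl(1+\sum_{m\geq1}a_m(s)q^m\bigr)$ satisfies $\mathscr{L}\,\Phi(\tau,s)=\Psi(s)\,q^{s}$, where $\mathscr{L}$ denotes the operator in~\eqref{eq.mlde_Serre}. If $\lambda$ is a double root, write $\Psi(s)=(s-\lambda)^2\,\widetilde\Psi(s)$ with $\widetilde\Psi(\lambda)\neq0$; differentiating $\mathscr{L}\Phi=\Psi(s)q^{s}$ in $s$ and evaluating at $s=\lambda$ annihilates the right-hand side, so $\partial_s\Phi(\tau,\lambda)$ is a solution. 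Since $\partial_s q^{s}=q^{s}\log q=2\pi i\,\tau\,q^{s}$, this solution reads $2\pi i\,\tau\cdot q^{\lambda}\bigl(1+\cdots\bigr)+q^{\lambda}\sum_{m}(\partial_s a_m)(\lambda)q^{m}$, i.e.\ a degree-one polynomial in $\tau$ with coefficients in $q^{\lambda}\,\C((q))$, as claimed; a root of multiplicity $\mu$ similarly yields a polynomial of degree $\mu-1$ in $\tau$.

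The main obstacle will be the bookkeeping around the double root: one must ensure that the parametrized coefficients $a_m(s)$ are in fact regular at $s=\lambda$ before differentiating, which holds precisely when no indicial root exceeds $\lambda$ by a positive integer. When such a forward resonance does occur, the pole of $a_m(s)$ at $s=\lambda$ has to be cleared by multiplying $\Phi$ by a suitable power of $(s-\lambda)$ first, and confirming that the resulting $s$-derivative is still a convergent, holomorphic solution on $\H$ with the stated $\tau$-polynomial structure is the delicate point.
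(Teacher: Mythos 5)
Your proposal is correct and is exactly the argument the paper itself relies on: the paper gives no proof of Lemma~\ref{L.Frobenius}, deferring entirely to the Frobenius method of Chapter~XVI of~\cite{Ince}, and your recursion $\Psi(s+m)\,a_m+\sum_{j<m}c_{m,j}(s)\,a_j=0$ (using that $P_{n-1}=0$) together with the parameter-differentiation device at a double root is precisely that standard method, with your indicial factors $\bigl(s-\tfrac{k+2\ell}{12}\bigr)$ being the correct general-weight form of the paper's $\bigl(t-\tfrac{\ell}{6}\bigr)$. The one point you leave thin --- clearing the poles of $a_m(s)$ at $s=\lambda$ when a double root also admits a forward integral resonance --- is handled in Ince by exactly the fix you name (multiplying $\Phi$ by a suitable power of $(s-\lambda)$ before differentiating), so no essential idea is missing.
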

It is important to know that the modular linear differential equation associated to a rational simple vertex operator algebra has 
\textit{no logarithmic solutions} even if two different indicial roots have an integral difference.

\section{Some properties of the Ibukiyama modular forms }

We have established some relations between the Imukiyama modular forms and the characters of the minimal models of type~$(2,p)$.
It is therefore natural to ask whether we can obtain modular forms of fractional weights from the characters of \textit{any} minimal model 
of type~$(P,Q)$. The characters of the minimal model of type~$(P,Q)$ is a modular function on~$\G(n)$, where~$n$ is defined in Theorem~\ref{T.DLX}.
Of course, we can define fractional modular forms~$\eta^{\ec}\ch_M$ on~$\G(n)$ with the multiplier system 
which is  the same as that of~$\eta^{\ec}$ and then  $\eta^{\ec}\ch_M$ is standard on~$\G(n)$.
However, it is not clear to us whether $\eta^{\ec}\ch_M$ is standard on a smaller congruence group~$\G(m)$.
Moreover, we want to know whether the following property holds for the Ibukiyama modular forms

\mn
(A)~Let $m$ and~$n$ be positive integers with~$m|n$. Then~$M_k(\G(m), \nu_m)\subset M_k(\G(n),\mu_n)$ for~$k\in\Q$ 
with~$\mu_n|\G(m)=\nu_m$, where $\nu$ and $\mu$ are multiplier systems of $\G(m)$ and  $\G(n)$.

\medskip 
We will give four examples of spaces of the Ibukiyama modular forms of fractional weights, and show 
that the set of pairs~$(m,n)=(5,15)$ and~$(7,21)$ satisfies (A).  Since the space of Ibukiyama modular forms~$M_{\frac{p-3}{2p}}(\G(p), v_p)$ is invariant by the $\MG$-action, it may be equal to the solution space of of some MLDE. In fact, we will give the corresponding monic MLDE. For any positive odd integer~$p>3$ we set 
\begin{equation*}
\f_s(\tau):\=e\left(\frac{2ps-1+p-p^2}{4p}\right)f_{p-2s}\quad\text{for}\quad1\leq s\leq\frac{p-1}{2}\,.
\end{equation*}
First, it was shown in Lemma~1.7 of~\cite{I} that~$\dim M_{\frac{p-3}{2p}}(\G(p), v_p)>1$ with any odd integer~$p>3$.
In particular,~$\dim M_{\frac{p-3}{2p}}(\G(p), v_p)=2$ and~3 for~$p=5$ and~7, respectively. (See p.~325, lemma1.7 of Section~1.3 in~\cite{I}) 
To show (A) 
for the pairs~$(m,n)=(5,15)$ and~$(7,21)$ it is, in principle, necessary to know basis of~$M_{\frac{1}{5}}(\G(5), v_5)$ and~$M_{2/7}(\G(7),v_7)$. 
Each basis of the space~$M_{\frac{1}{5}}(\G(5), v_5)$ and~$M_{2/7}(\G(7),v_7)$ is known (pp.~326--332, Section 2.1--2.3 in~\cite{I}) 
(however, in the cases~$p=15, 21$  they are not known);  the former basis  consists of
\begin{align}\label{E.f12}
\f_1^{[5]}(\tau)&:\=\eta(\tau)^{2/5}\ch_{2,\,5;1,1}(\tau)\=\eta(\tau)^{-3/5}\sum_{n\in\Z}(-1)^ne(\tau)^{\frac{5}{2}\left(n+\frac{3}{10}\right)^2}\,,\\
\f^{[5]}_2(\tau)&:\=\eta(\tau)^{2/5}\ch_{2,\,5;1,2}(\tau)\=\eta(\tau)^{-3/5}\sum_{n\in\Z}(-1)^ne(\tau)^{\frac{5}{2}\left(n+\frac{1}{10}\right)^2}\,,\label{E.f13a}
\end{align}
and the associated monic MLDE
is~$\sd^2_{1/5}(f)-\frac{11}{3600}E_4(f)=0$. 
The latter has the effective central charge of~$4/7$. Then the Ibukiyama modular forms 
are~$\f_1^{[7]}(\tau):=\eta(\tau)^{4/7}\ch_{2,7;1,1}(\tau)$, $\f_2^{[7]}:=\eta(\tau)^{4/7}\ch_{2,7;1,2}(\tau)$ 
and~$\f_3^{[7]}:=\eta(\tau)^{4/7}\ch_{2,7;1,3}(\tau)$. More precisely, they are written 
as~$\f_j^{[7]}(\tau)\=\eta(\tau)^{-3/7}\sum_{n\in\Z}(-1)^n\thin e(\tau)^{\frac{7}{2}\left(n+\frac{7-2s}{14}\right)^2}$ for~$1\leq j\leq3$,
which form a basis of the space~$M_{2/7}(\G(7),v_7)$, and then the $q$-expansion of~$\f_j^{[7]}$ for~$1\leq j\leq3$ is
given by 
\begin{equation}\label{eq.q-series}
\begin{aligned}
\f_1^{[7]}(\tau)&\=q^{3/7}\big(1-\frac{4}{7}q+\frac{15}{49}q^2+\frac{43}{343}q^3+\frac{965}{2401}q^4
+\frac{1524}{16807}q^5-\frac{39026}{117649}q^6-\frac{1874288}{5764801}q^7+\cdots\big)\,,\\
\f_2^{[7]}(\tau)&\=q^{1/7}\big(1+\frac{3}{7}q-\frac{13}{49}q^2+\frac{148}{343}q^3+\frac{1266}{2401}q^4
-\frac{8528}{16807}q^5+\frac{38870}{117649}q^6+\frac{213504}{5764801}q^7+\cdots\big)\,,\\
\f_3^{[7]}(\tau)&\=1+\frac{3}{7}q+\frac{36}{49}q^2-\frac{48}{343}q^3-\frac{400}{2401}q^4+\frac{3183}{16807}q^5
+\frac{50140}{117649}q^6+\frac{13535}{5764801}q^7+\cdots\,.
\end{aligned}
\end{equation}
The associated monic MLDE is~$\sd^3_{2/7}(f)-\frac{5}{252}E_4\sd_{2/7}(f)+\frac{85}{74088}E_6(f)\=0$.

We next treat fractional modular forms associated to the minimal model of type~$(2,15)$.  Since the effective central charge of this case is~$4/5$
the Ibukiyama modular forms, which do not  form a basis of $M(\G(15),v_{15}$), are written as  
\begin{alignat*}{5}
&\f_1^{[15]}(\tau)\=\eta(\tau)^{4/5}\ch_{2,15;1,1}(\tau)\,,&\quad&\f^{[15]}_2(\tau)\=\eta(\tau)^{4/5}\ch_{2,15;1,2}(\tau)\,,
&\quad&\f^{[15]}_3(\tau)\=\eta(\tau)^{4/5}\ch_{2,15;1,3}(\tau)\,,\\
&\f_4^{[15]}(\tau)\=\eta(\tau)^{4/5}\ch_{2,15;1,4}(\tau)\,, &\quad&\f^{[15]}_5(\tau)\=\eta(\tau)^{4/5}\ch_{2,15;1,5}(\tau)\,,
&\quad&\f^{[15]}_6(\tau)\=\eta(\tau)^{4/5}\ch_{2,15;1,6}(\tau)\,,\\
&\f_7^{[15]}(\tau)\=\eta(\tau)^{4/5}\ch_{2,15;1,7}(\tau)\,.
\end{alignat*}
It is not difficult to see that~$\f^{[15]}_j(\tau)\=\eta(\tau)^{-1/5}\sum_{n\in\Z}(-1)^ne(\tau)^{\frac{15}{2}\left(n+\frac{2j-1}{30}\right)^2}$ 
for any~$1\leq j\leq7$ and their Fourier expansions are given by
\begin{equation}\label{eq.p=15}
\begin{aligned}
&\f^{[15]}_1(\tau)\=q^{7/5}\left(1-\frac{4}{5}q+\frac{3}{25}q^2+\frac{1}{125}q^3+\frac{79}{625}q^4
-\frac{776}{15625}q^5+\frac{16609}{78125}q^6+\cdots\right)\,,\\
&\f^{[15]}_2(\tau)\=q\left(1+ \frac{1}{5}q- \frac{17}{25}q^2+\frac{16}{125}q^3+\frac{84}{625}q^4+\frac{1199}{15625}q^5+\cdots\right)\,,\\
&\f^{[15]}_3(\tau)\=q^{2/3}\left(1 + \frac{1}{5}q+\frac{8}{25}q^2-\frac{84}{125}q^3+\frac{159}{625}q^4
+\frac{1324}{15625}q^5+\frac{22604}{78125}q^6+\cdots\right)\,,\\
&\f^{[15]}_4(\tau)\=q^{2/5}\left(1 + \frac{1}{5}q+\frac{8}{25}q^2+\frac{41}{125}q^3
-\frac{341}{625}q^4+\frac{3199}{15625}q^5+\frac{23229}{78125}q^6+\cdots\right)\,,\\
&\f^{[15]}_5(\tau)\=q^{1/5}\left(1+\frac{1}{5}q+\frac{8}{25}q^2+\frac{41}{125}q^3
+\frac{284}{625}q^4-\frac{9301}{15625}q^5+\frac{32604}{78125}q^6+\cdots\right)\,,\\
&\f^{[15]}_6(\tau)\=q^{1/15}\left(1+\frac{1}{5}q+\frac{8}{25}q^2+\frac{41}{125}q^3
+\frac{284}{625}q^4+\frac{6324}{15625}q^5-\frac{29896}{78125}q^6+\cdots\right)\,,\\
&\f^{[15]}_7(\tau)\=1+\frac{1}{5}q+\frac{8}{25}q^2+\frac{41}{125}q^3+\frac{284}{625}q^4+\frac{6324}{15625}q^5
+\frac{48229}{78125}q^6-\cdots\,,
\end{aligned}
\end{equation}\label{eq.1/5}
The set~$\bigl\{\f^{[15]}_j\bigr\}$ spans a \textit{subspace} of~$M_{2/5}(\G(15), v_{15})$, and it is not difficult by using~\eqref{eq.q-series}
and~\eqref{eq.p=15} and the fact that the Strum bound of~$\G(15)$ is~1440 to find that
\begin{equation*}
\f^{[5]}_2(\tau)^2\=\f^{[15]}_4(\tau)-\f^{[15]}_7(\tau)\,,\quad
\f^{[5]}_2(\tau)\f^{[5]}_1(\tau)\= \f^{[15]}_3(\tau)\,,\quad
\f^{[5]}_1(\tau)^2\=\f^{[15]}_1(\tau)+\f^{[15]}_6(\tau)\,.
\end{equation*}
(cf.~p. 8, Lemma~3 in~\cite{I2}.), where the left-hand sides of the above equations are basis of  $M_{2/5}(\G(5), v_5^2)$. 
We now prove~$v_5^2=v_{15}$
since both hand sides of \eqref{eq.1/5} must have the same weights.
Since for any integer $p$ ($>3$) there are multiplier systems~$v_0$ corresponding $p=5, 15$ which we denote it by~$v_{0, 5}$ and~$v_{0, 15}$ by~$v_{0,5}(\gamma)=\frac{\eta^{3/5}(\gamma\thin\tau)}{(c\tau+d)^{3/10}\eta^{3/5}(\tau)}$ 
for any~$\gamma=\left(\begin{smallmatrix}a&b\\ c&d\end{smallmatrix}\right)\in\G(5)$ 
and~$v_{0,15}(\gamma)=\frac{\eta^{1/5}(\gamma\tau)}{(c\tau+d)^{1/10}\eta^{1/5}(\tau)}$
for any~$\gamma=\left(\begin{smallmatrix}a&b\\ c&d\end{smallmatrix}\right)\in\G(15)$, respectively. 
Then we immediately see~$v_{0,5}(\gamma)=v_{0,15}(\gamma)^3$ 
for any~$\gamma\in\G(15)$. It  also holds that $v_{0,15}^{40}=1$ for any~$\gamma\in\G(15)$ since~$\eta(\tau)^8=\left(\eta(\tau)^{1/5}\right)^{40}$ 
is a modular form of weight~4 
on~$\G(3)$. Thus for any~$\gamma\in\G(15)$ we can see that $v_p=v_0^{p^2-1}$ weight of $v_{15}$ is $2/5$.
\begin{equation*}
\frac{v_{15}(\gamma)}{v_{5}(\gamma)^2}\=\frac{v_{0,15}(\gamma)^{15^2-1}}{v_{0,5}(\gamma)^{2(5^2-1)}}
\=\frac{v_{0,15}(\gamma)^{15^2-1}}{v_{0,15}(\gamma)^{6(5^2-1)}}\=v_{0,15}(\gamma)^{80}\=\left(v_{0,15}(\gamma)^{40}\right)^{2}=1\,.
\end{equation*}
Therefore one can conclude~$M_{2m/5}(\G(5), v_{5}^{2m})\subset M_{2m/5}(\G(15),v_{15}^m)$ for any positive integer~$m$ which implies~(A).
The monic MLDE of the 7th order whose solution space consists of the Ibukiyama modular forms of weight~$2/5$  on~$\G(15)$; any function 
of~\eqref{eq.p=15} is a solution of the MLDE given by
\begin{multline*}
\sd^7_{2/5}(\f)-\frac{21}{50}E_4\sd^5_{2/5}(\f)+\frac{529}{2700}E_6\sd_{2/5}^4(\f)-\frac{3283}{90000}E_{8}\sd_{2/5}^3(\f)
+\frac{7733}{2430000}E_{10}\sd_{2/5}^2(\f)\\
-\frac{248501}{729000000}\Big(E_4^3-\frac{31233600}{22591}\Delta\Big)\sd_{2/5}(\f)+\frac{248501}{4374000000}E_{14}(\f)\=0\,.
\end{multline*}

Finally, we discuss modular forms of weight~$3/7$ on~$\G(21)$, i.e.~fractional modular forms associated to
the minimal model of type~$(2,21)$. Since the effective central charge is~$6/7$ the Ibukiyama modular forms are  given by 
\begin{alignat*}{5}
&\f_1^{[21]}(\tau)\=\eta(\tau)^{6/7}\ch_{2,21;1,1}(\tau)\,,&\quad&\f^{[21]}_2(\tau)\=\eta(\tau)^{6/7}\ch_{2,21;1,2}(\tau)\,,
&\quad&\f^{[21]}_3(\tau)\=\eta(\tau)^{6/7}\ch_{2,21;1,3}(\tau)\,,\\
&\f_4^{[21]}(\tau)\=\eta(\tau)^{6/7}\ch_{2,21;1,4}(\tau)\,, &\quad&\f^{[21]}_5(\tau)\=\eta(\tau)^{6/7}\ch_{2,21;1,5}(\tau)\,,
&\quad&\f^{[21]}_6(\tau)\=\eta(\tau)^{6/7}\ch_{2,21;1,6}(\tau)\,,\\
&\f_7^{[21]}(\tau)\=\eta(\tau)^{6/7}\ch_{2,21;1,7}(\tau)\,, &\quad&\f^{[21]}_8(\tau)\=\eta(\tau)^{6/7}\ch_{2,21;1,8}(\tau)\,,
&\quad&\f^{[21]}_9(\tau)\=\eta(\tau)^{6/7}\ch_{2,21;1,9}(\tau)\,,\\
&\f_{10}^{[21]}(\tau)\=\eta(\tau)^{6/7}\ch_{2,21;1,10}(\tau)\,.
\end{alignat*}
It is not difficult to see that~$\f^{[21]}_j(\tau)\=\eta(\tau)^{-3/21}\sum_{n\in\Z}(-1)^ne(\tau)^{\frac{21}{2}\left(n+\frac{2j-1}{42}\right)^2}$ 
for any~$1\leq j\leq10$, and then their Fourier expansions are given by
\begin{equation}\label{eq.p=10}
\begin{aligned}
&\f^{[21]}_1(\tau)\=q^{15/7}\bigl((1-\frac{6}{7}q+\frac{4}{49}q^2-\frac{1}{343}q^3+\frac{194}{2401}q^4-\frac{825}{16807}q^5+\frac{16351}{117649}q^6+\cdots\bigr)\,,\\
&\f^{[21]}_2(\tau)\=q^{12/7}\bigl(1+ \frac{1}{7}q-\frac{38}{49}q^2+\frac{27}{343}q^3+\frac{187}{2401}q^4+\frac{533}{16807}q^5+\frac{10576}{117649}q^6+\cdots\bigr)\,,\\
&\f^{[21]}_3(\tau)\=q^{4/3}\bigl(1+\frac{1}{7}q+\frac{11}{49}q^2-\frac{267}{343}q^3+\frac{383}{2401}q^4+\frac{484}{16807}q^5+\frac{20082}{117649}q^6+\cdots\bigr)\,,\\
&\f^{[21]}_4(\tau)\=q\bigl(1+\frac{1}{7}q+\frac{11}{49}q^2+\frac{76}{343}q^3-\frac{1675}{2401}q^4+\frac{1856}{16807}q^5+\frac{19739}{117649}q^6+\cdots\bigr)\,,\\
&\f^{[21]}_5(\tau)\=q^{5/7}\bigl(1+\frac{1}{7}q+\frac{11}{49}q^2+\frac{76}{343}q^3+\frac{726}{2401}q^4-\frac{12550}{16807}q^5+\frac{29343}{117649}q^6+\cdots\bigr)\,,\\
&\f^{[21]}_6(\tau)\=q^{10/21}\bigl(1+\frac{1}{7}q+\frac{11}{49}q^2+\frac{76}{343}q^3+\frac{726}{2401}q^4+\frac{4257}{16807}q^5-\frac{71499}{117649}q^6+\cdots\bigr)\,,\\
&\f^{[21]}_7(\tau)\=q^{2/7}\bigl(1+\frac{1}{7}q+\frac{11}{49}q^2+\frac{76}{343}q^3+\frac{726}{2401}q^4+\frac{4257}{16807}q^5+\frac{46150}{117649}q^6+\cdots\bigr)\,,\\
&\f^{[21]}_8(\tau)\=q^{1/7}\bigl(1+\frac{1}{7}q+\frac{11}{49}q^2+\frac{76}{343}q^3+\frac{726}{2401}q^4+\frac{4257}{16807}q^5+\frac{46150}{117649}q^6+\cdots\bigr)\,,\\
&\f^{[21]}_9(\tau)\=q^{1/21}\bigl(1+\frac{1}{7}q+\frac{11}{49}q^2+\frac{76}{343}q^3+\frac{726}{2401}q^4+\frac{4257}{16807}q^5+\frac{46150}{117649}q^6+\cdots\bigr)\,,\\
&\f^{[21]}_{10}(\tau)\=1+\frac{1}{7}q+\frac{11}{49}q^2+\frac{76}{343}q^3+\frac{726}{2401}q^4+\frac{4257}{16807}q^5+\frac{46150}{117649}q^6+\cdots\,.
\end{aligned}
\end{equation}
The monic MLDE such that each function of~\eqref{eq.p=10} is a solution is given by
\begin{multline*}
\sd^{10}_{3/7}(\f)-\frac{451}{336}E_4\sd^8_{3/7}(\f)+\frac{39545}{37044}E_6\sd_{3/7}^7(\f)-\frac{9757}{43904}E_{8}\sd_{3/7}^6(\f)
-\frac{4330919}{37340352}E_{10}\sd_{3/7}^5(\f)\\
+\frac{5130680923}{58549671936}\Big(E_4^3-\frac{3204764098560}{5130680923}\Delta\Big)\sd^4_{3/7}(\f)
-\frac{6729854203}{263473523712}E_{14} \sd^3_{3/7}(\f)\\
+\frac{1403994215923}{354108415868928}E_4\Big(E_4^3-\frac{1664226941276160}{1403994215923}\Delta\Big)\sd_{3/7}^2(\f)\\
-\frac{3323559970259}{13013484283183104}E_6\Big(E_4^3-\frac{1706576837437440}{3323559970259}\Delta\Big)\sd^{[1]}_{3/7}(\f)\\
-\frac{637460013}{42313823813632}E_4^2\Big(E_4^3-\frac{16388096}{12099}\Delta\Big)\f\=0\,.
\end{multline*}

 The set~$\bigl\{\f^{[21]}_j\bigr\}$ spans a \textit{subspace} of~$M_{3/7}(\G(21), v_{21})$, and it is not difficult to see that 
\begin{equation*}
\begin{split}
\f^{[7]}_3(\tau)^3&\=\f^{[21]}_{10}(\tau)^2+2\f^{[21]}_{10}(\tau)\f^{[21]}_{4}(\tau)-\f^{[21]}_{7}(\tau)\f^{[21]}_{5}(\tau)+\f^{[21]}_{4}(\tau)^2+\f^{[21]}_{7}(\tau)\f^{[21]}_{2}(\tau)\,,\\
\f^{[7]}_{3}(\tau)^2\f^{[7]}_{2}(\tau)&\=\f^{[21]}_{10}(\tau)\f^{[21]}_{8}(\tau)+\f^{[21]}_{10}(\tau)\f^{[21]}_{1}(\tau)+\f^{[21]}_{8}(\tau)\f^{[21]}_{4}(\tau)+\f^{[21]}_{4}(\tau)\f^{[21]}_{1}(\tau)\,,\\
\f^{[7]}_{3}(\tau)^2\f^{[7]}_{1}(\tau)&\=\f^{[21]}_{8}(\tau)\f^{[21]}_{7}(\tau)+\f^{[21]}_{7}(\tau)\f^{[21]}_{1}(\tau)\,,\\
\f^{[7]}_{2}(\tau)^3&\=\f^{[21]}_{8}(\tau)\f^{[21]}_{7}(\tau)+\f^{[21]}_{7}(\tau)\f^{[21]}_{1}(\tau)+\f^{[21]}_{5}(\tau)^2
-2\f^{[21]}_{5}(\tau)\f^{[21]}_{2}(\tau)+\f^{[21]}_{2}(\tau)^2\,,\\
\f^{[7]}_{2}(\tau)^2\f^{[7]}_{3}(\tau)&\=\f^{[21]}_{10}(\tau)^2+\f^{[21]}_{7}(\tau)\f^{[21]}_{4}(\tau)\,,\\
\f^{[7]}_{2}(\tau)^2\f^{[7]}_{1}(\tau)&\=\f^{[21]}_{10}(\tau)\f^{[21]}_{5}(\tau)-\f^{[21]}_{10}(\tau)\f^{[21]}_{2}(\tau)
+\f^{[21]}_{5}(\tau)\f^{[21]}_{4}(\tau)-\f^{[21]}_{4}(\tau)\f^{[21]}_{2}(\tau)\,,\\
\f^{[7]}_{1}(\tau)^3&\=\f^{[21]}_{10}(\tau)\f^{[21]}_{7}(\tau)-\f^{[21]}_{8}(\tau)^2-\f^{[21]}_{8}(\tau)\f^{[21]}_{1}(\tau)
+\f^{[21]}_{7}(\tau)\f^{[21]}_{4}(\tau)-\f^{[21]}_{1}(\tau)^2\,,\\
\f^{[7]}_{1}(\tau)^2\f^{[7]}_{3}(\tau)&\=\f^{[21]}_{8}(\tau)\f^{[21]}_{5}(\tau)-\f^{[21]}_{8}(\tau)\f^{[21]}_{2}(\tau)
+\f^{[21]}_{5}(\tau)\f^{[21]}_{1}(\tau)-\f^{[21]}_{2}(\tau)\f^{[21]}_{1}(\tau)\,\\
\f^{[7]}_{1}(\tau)^2\f^{[7]}_{2}(\tau)&\=\f^{[21]}_{7}(\tau)\f^{[21]}_{5}(\tau)-\f^{[21]}_{7}(\tau)\f^{[21]}_{2}(\tau)\,,\\
\f^{[7]}_{3}(\tau)\f^{[7]}_{2}(\tau)\f^{[7]}_{1}(\tau)&\=\f^{[21]}_{7}(\tau)^2
\end{split}
\end{equation*}
by using the Strum bound 
of~$\G(21)$.
We still need to prove that~$ v_{7}^{3}=v_{21}^2$. As before, we define two maps~$v_{0, 5}:\G(7)\to\C$ and~$v_{0, 15}:\G(21)\to\C$ 
by~$v_{0,7}(\gamma)=\frac{\eta^{3/7}(\gamma\tau)}{(c\tau+d)^{3/14}\eta^{3/7}(\tau)}$ 
for any~$\gamma=\left(\begin{smallmatrix}a&b\\c&d\end{smallmatrix}\right)\in\G(7)$ 
and~$v_{0,21}(\gamma)=\frac{\eta^{1/7}(\gamma\tau)}{(c\tau+d)^{1/14}\eta^{1/7}(\tau)}$ 
for any~$\gamma=\left(\begin{smallmatrix}a&b\\ c&d\end{smallmatrix}\right)\in\G(21)$, respectively.
Then it is  not difficult to see~$v_{0,7}(\gamma)^2=v_{0,21}(\gamma)^3$ for~$\gamma\in\G(21)$. We also obtain~$\left(v_{0,21}\right)^{56}=1$ 
for any~$\gamma\in\G(21)$ 
since~$\eta(\tau)^8=\left(\eta^{1/7}(\tau)\right)^{56}$ is a modular form of weight~4 on~$\G(3)$. Therefore  we have 
\begin{equation*}
\frac{v_{21}(\gamma)^2}{v_{7}(\gamma)^3}\=\frac{v_{0,21}(\gamma)^{2(21^2-1)}}{v_{0,7}(\gamma)^{3(7^2-1)}}
\=\frac{v_{0,21}(\gamma)^{2(21^2-1)}}{v_{0,21}(\gamma)^{9(7^2-1)}}\=v_{0,21}(\gamma)^{448}\=\left(v_{0,15}(\gamma)^{56}\right)^8=1\,.
\end{equation*}
Hence one can see~$M_{6m/7}(\G(7), v_{7}^{3m})\subseteq M_{6m/7}(\G(21),v_{21}^{2m})$ for any positive integer~$m$.

For odd integer~$p>13$ the set of the Ibukiyama modular forms is not always a basis of the space of modular forms 
of fractional weight~$(p-3)/2p$ on~$\G(p)$ with multiplier systems~$v_p$ (cf.~\cite{I2}), and we do not know 
if~(A) holds between~$M_*(\G(m), v_m)$ and~$M_*(\G(n), v_n)$ when $m, n>13$. 
However, it was already seen that the set of pairs $p=5, 15$ and $p=7, 21$ satisfy (A). This suggests;
 
\mn
\textbf{Conjecture}. Let~$\{\eta^{\ec}\ch_{\,2,\,p\,;\,1,\,s}\}$ and~$\{\eta^{\ec'}\ch_{\,2,\,p'\,;\,1,\,s'}\}$ be the sets of  modular forms 
on~$\G(p)$ and~$\G(p')$
with multiplier systems of the minimal models of type~$(2,p)$ and of type~$(2,p')$, respectively.  
Then the former and the latter have~the property~(A) if and only if~$p|p'$.

We now show that each set of the Ibykiyama modular forms spans the solution space of some monic modular linear differential equation.
We start with an easy lemma.

\begin{lemma}\label{L.Eta}
Let~$\G\subset PSL_2(\R)$ be a Fuchsian group. Suppose that~$f\in M_k(\Gamma)$ is a solution of the monic modular linear differential 
equation~$\sd_k^{n}(f)+\sum_{j=2}^nP_{2j}\sd_k^{n-j}(f)=0$. Then~$g=\eta^\ell f$ with an~$\ell\in\Z$ is a solution of a monic linear differential
equation~$\sd_{k+\frac{\ell}{2}}^{n}(g)+\sum_{j=2}^nP_{2j}\sd_{k+\frac{\ell}{2}}^{n-j}(g)=0$ of weight~$k+\frac{\ell}{2}$.
\end{lemma}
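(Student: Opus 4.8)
The plan is to reduce the whole statement to a single intertwining identity between multiplication by~$\eta^\ell$ and the Serre operator, after which the conclusion follows by a short induction and a factoring argument. The one classical input I would invoke is the logarithmic-derivative identity $D\eta = \tfrac{1}{24}E_2\,\eta$, which is immediate from $\eta(\tau)=q^{1/24}\prod_{n\geq1}(1-q^n)$ upon computing $D\log\eta$ and recognizing $1-24\sum_{n\geq1}\sigma_1(n)q^n = E_2$.

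First I would record the special case that $\eta^\ell$ is annihilated by the Serre operator of its own weight~$\ell/2$:
\[
\sd_{\ell/2}(\eta^\ell) = D(\eta^\ell) - \frac{\ell/2}{12}E_2\,\eta^\ell = \ell\,\eta^{\ell-1}D\eta - \frac{\ell}{24}E_2\,\eta^\ell = \frac{\ell}{24}E_2\,\eta^\ell - \frac{\ell}{24}E_2\,\eta^\ell = 0.
\]
The crux, however, is the intertwining relation
\[
\sd_{k+\frac{\ell}{2}}(\eta^\ell h) = \eta^\ell\,\sd_k(h),
\]
valid for any holomorphic~$h$ (here the subscript~$k$ merely records the weight-shift, and $h$ need not be modular). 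This is a one-line product-rule computation: expanding $D(\eta^\ell h) = \tfrac{\ell}{24}E_2\,\eta^\ell h + \eta^\ell\,Dh$ and subtracting $\tfrac{k+\ell/2}{12}E_2\,\eta^\ell h$, the two terms proportional to $\ell\,E_2\,\eta^\ell h$ cancel exactly (because $\tfrac{\ell/2}{12}=\tfrac{\ell}{24}$), leaving $\eta^\ell\bigl(Dh - \tfrac{k}{12}E_2 h\bigr) = \eta^\ell\,\sd_k(h)$.

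Next I would promote this to the iterated operators by induction on~$i$, proving $\sd_{k+\ell/2}^{\,i}(\eta^\ell f) = \eta^\ell\,\sd_k^{\,i}(f)$. Since $\sd_k^{\,i-1}(f)$ has weight $k+2(i-1)$, applying the definition $\sd_{k+\ell/2}^{\,i} = \sd_{k+2(i-1)+\ell/2}\circ\sd_{k+\ell/2}^{\,i-1}$ together with the intertwining relation (with base weight $k+2(i-1)$) gives
\[
\sd_{k+\ell/2}^{\,i}(\eta^\ell f) = \sd_{k+2(i-1)+\ell/2}\bigl(\eta^\ell\,\sd_k^{\,i-1}(f)\bigr) = \eta^\ell\,\sd_{k+2(i-1)}\bigl(\sd_k^{\,i-1}f\bigr) = \eta^\ell\,\sd_k^{\,i}(f).
\]
Finally, substituting $g=\eta^\ell f$ into the twisted equation and applying this termwise (the coefficients $P_{2j}$ are ordinary modular forms, untouched by the twist, so they pull through the factoring) yields
\[
\sd_{k+\ell/2}^{\,n}(g) + \sum_{j=2}^n P_{2j}\,\sd_{k+\ell/2}^{\,n-j}(g) = \eta^\ell\Bigl(\sd_k^{\,n}(f) + \sum_{j=2}^n P_{2j}\,\sd_k^{\,n-j}(f)\Bigr) = 0,
\]
the parenthesized expression vanishing by hypothesis. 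The only point requiring care—and the nearest thing to an obstacle—is the weight alignment in the iteration, namely verifying that the indices on the Serre operators on the two sides stay synchronized as the base weight advances by~$2$ at each step; once the intertwining identity is established this is purely formal.
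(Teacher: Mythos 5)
Your proof is correct and takes essentially the same approach as the paper's: both rest on the identity $\sd_k(\eta)=\bigl(\tfrac{1}{24}-\tfrac{k}{12}\bigr)E_2\,\eta$ (equivalently $D\eta=\tfrac{1}{24}E_2\,\eta$), the resulting one-step intertwining relation $\sd_{k+\frac{\ell}{2}}(\eta^\ell h)=\eta^\ell\sd_k(h)$, and an induction transferring it to the iterated Serre operators before factoring $\eta^\ell$ out of the equation termwise. The paper merely compresses these same steps into a single displayed line.
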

\begin{proof}
It follows from the definition of the iterated Serre operators and the equality~$\sd_k(\eta)=(1/24-k/12)E_2\eta$ 
that~$\sd_{k+\frac{\ell}{2}}^{n+1}(\eta^\ell f)=\sd_{k+\frac{\ell}{2}+2}^{n}\bigl(\sd_{k+\frac{\ell}{2}}(\eta^\ell f)\bigr)
=\sd_{k+\frac{\ell}{2}+2}^{n}\bigl(\eta^\ell\sd_k(f)\bigr)$since~$\sd_{k+\frac{\ell}{2}}^n(\eta^\ell  f)=\eta^\ell\sd_k^n(f)$.
\end{proof}

\begin{theorem}\label{t.rational}
 Let~$L_{2,\,p}$ be the minimal model of type~$(2,p\thin)$ with an odd integer~$p>3$, and let~$\ch_{\,2,\,p\,;\,1,\,s}$ for~$0<s\leq(p-1)/2$ 
 be the characters of the simple $L_{2,\,p}$-module~$L_{\,2,\,p\,;\,1,\,s}$. \\
\textup{(a)}
There exists a monic modular linear differential equation 
\begin{equation}\label{eq.mlde_frac}
\sd^{\frac{p-1}{2}}_0(f)+\sum_{j=2}^{(p-1)/2}P_{2j}\,\sd^{\frac{p-1}{2}-j}_0(f)\=0
\end{equation}
whose solution space is equal to the space spanned by the characters~$\ch_{\,2,\,p\,;\,1,\,(s+1)/2}$ for all~$0<s\leq(p-1)/2$, 
where~$P_{2j}$ is a holomorphic modular form of weight~$2j$ on the full modular group.\\
\textup{(b)}~The space of the Ibukiyama modular forms of weight~$\frac{p-3}{2p}$ on~$\Gamma(p)$  
is equal to the solution space of some monic modular linear differential equation of weight~$\frac{p-3}{2p}$ of the order~$\frac{p-1}{2}$ on the full modular group, which is written is the form
\begin{equation}\label{eq.mldech} 
\sd^{\frac{p-1}{2}}_{\frac{p-3}{2p}}(f)+\sum_{j=2}^{\frac{p-1}{2}}P_{2j}\,\sd^{\frac{p-3}{2p}-j}_{\frac{p-3}{2p}}(f)\=0\,,
\end{equation}
where each~$P_{2j}$ is the same modular form appearing in equation~\eqref{eq.mlde_frac}. 
\end{theorem}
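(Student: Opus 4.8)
The plan is to obtain part (a) as a direct application of Mason's criterion (Proposition~\ref{t.exponents}), and then to deduce part (b) from part (a) by the $\eta$-twisting Lemma~\ref{L.Eta}, combined with the identification of $\eta^{\ec}\ch_{2,p;1,s}$ with the Ibukiyama forms recorded in Theorem~\ref{t.minimal_Ibukiyama}.

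For part (a), I would first note that, since $L_{2,p}$ is a rational simple VOA of CFT and finite type (Theorem~\ref{T.Minimal_Main}(a)), the space $\sch_{L_{2,p}}$ is invariant under the slash-$0$ action of $\MG$; the explicit $S$-transformation displayed just after Theorem~\ref{t.minimal_Ibukiyama} makes this concrete. Hence the column vector $\VF={}^t\!\bigl(\ch_{2,p;1,1},\dots,\ch_{2,p;1,(p-1)/2}\bigr)$ is a weight-$0$ vector-valued modular form with representation $\rho$ satisfying $\rho(-I_2)=\Id$ (as $-I_2$ fixes $\H$ pointwise), matching $v(-I_2)=(-1)^0$. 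From \eqref{eq.q-expansion} its components have leading exponents $\lambda_s=\frac{(p-2s)^2}{8p}-\frac1{24}$; these are pairwise distinct because the odd integers $p-2s$ with $1\le s\le(p-1)/2$ are distinct, so the components are linearly independent and, after reordering, $\VF$ is strictly normalized. It then remains to verify the exponent identity of Proposition~\ref{t.exponents} for $k=0$ and $n=(p-1)/2$. Writing $p=2n+1$, the numbers $p-2s$ run through the odd integers $1,3,\dots,2n-1$, so $\sum_s(p-2s)^2=\frac{n(4n^2-1)}{3}=\frac{p(p-1)(p-2)}{6}$, whence $\sum_s\lambda_s=\frac{(p-1)(p-3)}{48}$ and $n(n-1)-12\sum_s\lambda_s=\frac{(p-1)(p-3)}{4}-\frac{(p-1)(p-3)}{4}=0$. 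Proposition~\ref{t.exponents} therefore yields a monic MLDE of weight $0$ and order $(p-1)/2$ whose solution space is exactly the span of the characters, which is \eqref{eq.mlde_frac}; the top coefficient $P_2$ vanishes because every weight-$2$ holomorphic modular form on $\MG$ is $0$.

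For part (b), recall from Theorem~\ref{t.minimal_Ibukiyama} that, up to nonzero constants, the functions $\eta^{\ec}\ch_{2,p;1,s}$ are precisely the Ibukiyama forms $f_{p-2s}$, and that $\ec=(p-3)/p$, so that $\eta^{\ec}$ has weight $(p-3)/(2p)$. I would apply Lemma~\ref{L.Eta} with $\ell=\ec$ to each solution of \eqref{eq.mlde_frac}. Although that lemma is stated for $\ell\in\Z$, its proof uses only the twisting identity $\sd_{k+\ell/2}(\eta^{\ell}g)=\eta^{\ell}\sd_k(g)$, which follows from $\sd_k(\eta)=(\tfrac1{24}-\tfrac{k}{12})E_2\eta$ (equivalently $D(\eta)=\tfrac1{24}E_2\eta$) and holds verbatim for every real $\ell$; iterating gives $\sd_{k+\ell/2}^{m}(\eta^{\ell}g)=\eta^{\ell}\sd_k^{m}(g)$. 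Consequently each $\eta^{\ec}\ch_{2,p;1,s}$ solves the monic MLDE \eqref{eq.mldech} of weight $(p-3)/(2p)$ and order $(p-1)/2$ with the \emph{same} coefficients $P_{2j}$. Since that equation has order $(p-1)/2$, its solution space is $(p-1)/2$-dimensional, while the linearly independent Ibukiyama forms $f_r$ (see \cite{I}, and Theorem~\ref{t.im}(b) for the spanning property) are exactly $(p-1)/2$ in number as $r$ runs over the odd integers in $[1,p-2]$; hence they exhaust the solution space, giving the claimed equality.

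The only step requiring genuine care is the passage from integral to fractional $\eta$-power in Lemma~\ref{L.Eta}, and this is where I expect the (mild) main obstacle to lie: one must confirm that the purely algebraic twisting identity is insensitive to the multiplier system introduced by $\eta^{\ec}$, and that the transported equation remains monic with coefficients $P_{2j}$ that are still holomorphic modular forms of weight $2j$ on $\MG$ (the weight shift is absorbed entirely into the subscripts of the Serre operators, not into the $P_{2j}$). Everything else reduces either to the arithmetic identity $\sum_s(p-2s)^2=\frac{p(p-1)(p-2)}{6}$ or to the dimension count above, both of which are routine.
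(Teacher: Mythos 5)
Your proposal is correct and follows essentially the same route as the paper: part (a) is Mason's criterion (Proposition~\ref{t.exponents}) applied to the vector of characters, and part (b) is the $\eta$-twisting Lemma~\ref{L.Eta} with $n=(p-1)/2$, $k=0$. Your writeup in fact supplies the details the paper leaves implicit — the verification of the exponent identity $n(n-1)=12\sum_s\lambda_s$ and the observation that Lemma~\ref{L.Eta}, stated for $\ell\in\Z$, must be (and, by its proof, can be) applied with the fractional exponent $\ell=\ec$.
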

\begin{proof}
The statement~(a) follows from and Theorem~\ref{t.exponents}. Lemma~\ref{L.Eta} with~$n=(p-1)/2$ and~$k=0$ shows
 the assertion~(b). 
 \end{proof} 
\noindent
It must be mentioned that equation~\eqref{eq.mlde_frac} was first found in the form which uses~$D=(2\pi i)^{-1}d/d\tau$ in Theorem 6.1 of~\cite{Milas}.  Since equation~\eqref{eq.mlde_frac} and Milas' linear differential equation are monic and have the same solution space, they must coincide 
and then the latter is automatically monic MLDEs.  We expected,  as an application of the Theorem~\ref{t.rational}~(b), 
that we can obtain supersingular $j$-invariant from the 0\thin th order term of equation~\eqref{eq.mldech} as in~\cite{MMO}. 
However, this fails since 0\thin th order terms of~{\eqref{eq.mldech}  and the Milas' one are equal.

\section{Vector-valued modular forms associated to modular forms of fractional weights on~$\mathbf{\G(5)}$}\label{s.monodromy}
Let~$f(\tau)$ be a (holomorphic or meromorphic) modular form of positive weight~$k$ on a Fuchsian group~$\G\subset PSL_2(\R)$ and~$t(\tau)$ a modular function with respect to~$\G$. Then the function~$\widetilde{f}(t)=f(t(\tau))$ satisfies a linear differential equation of order~$k+1$ 
with algebraic coefficients, or with polynomial coefficients if~$\G\backslash\H$ has genus~0 and~$t(\tau)$ generates the field 
of modular functions on~$\G$. 

Here we give examples of vector-valued modular forms of weight~$k=m/5$ with $m\in\Z_{\geq0}$ but of the minimal order of ordinary linear differential equation which has these modular forms as solutions with order~$>\text{weight}+1$. (See Proposition~21~p.61) of~\cite{123}).
The space of characters of the minimal model of type~$(2, 5)$ multiplied  by~$\eta^{2/5}$ and the space of modular forms of weight~$1/5$
with level five equal as we have seen in Theorem~\ref{t.minimal_Ibukiyama}.  We will take a symmetric tensor product representation of~$\MG$-actin $\pi_5 :M_{1/5}(\G(5),v_5)\to GL_2(\C)$, which give a VVMF, and the set spanned by components function of the VVMF  is equal to the solution space of some monic MLDE of order~$1+5k$.

Let~${}^t(g_1, g_2)\in\mathsf{Hol}(\H)\times\mathsf{Hol}(\H)$. We denote~$\pi_5(g_1, g_2)(\g):\MG\to GL_2(\C)$ a representation 
of~$\MG$ on~$GL_2(\C)$ defined by
for any~$\gamma\in\MG$. Then one can show easily that \blue(17)
\begin{equation}\label{eq.repd}
\pi_5{}^t\bigl(\f^{[5]}_1, \f^{[5]}_2\bigr)(T) 
\=\begin{pmatrix}
e\bigl(-\frac{2}{5}\bigr)&0\\ 
0&e{\bigl(-\frac{3}{5}\bigr)}
\end{pmatrix}\,,\;
\pi_5{}^t\bigl(\f^{[5]}_1, \f^{[5]}_2\bigr)(S)
\=\frac{e\left(\frac{3}{4}\right)}{\sqrt{10}}
\begin{pmatrix}
-\sqrt{5+\sqrt{5}}&\sqrt{5-\sqrt{5}}\\
\sqrt{5-\sqrt{5}}&\sqrt{5+\sqrt{5}}
\end{pmatrix}.
\end{equation}
}

It is proved in Lemma~1.7 in~\cite{I} that~$\dim  M_k(\Gamma(5))=1+5k$ for any~$k\in\frac{1}{5}\Z_{>0}$.
Let~$m+1=5k+1(=\dim  M_k(\G(5)))$, and define a VVMF~$\VF={}^t(f_0,\,\dotsc,\, f_m)$ with~$f_\ell=\bigl(\f^{[5]}_2\bigr)^{m-\ell}\bigl(\f^{[5]}_1\bigr)^{\ell}$ 
for any~$0\leq\ell\leq m$. Then it is shown in~\cite{NOA} that representation~\eqref{eq.repd} gives
\begin{equation*}
(\VF|_k\gamma)(\tau)\=A_\gamma\,\VF(\tau)\=
\begin{pmatrix}
a_0^{m+1}&(m+1)a_0^{m}b_0&\cdots&b_0^{m+1}\\
a_0^{m}c_0&\cdots&\cdots&b_0^{m}d_0\\
\cdots&\cdots&\cdots&\cdots\\
c_0^{m+1}&(m+1)c_0^{m}d_0&\cdots&d_0^{m+1}
\end{pmatrix}
\VF(\tau)\=\Sym^{m+1} {}^t (\pi_5(\f^{[5]}_2,\f^{[5]}_1)
(\gamma))\,\VF(\tau)
\end{equation*}
for any~$\gamma\in\MG$, 
where~$\pi_5{}^t\bigl(\f^{[5]}_1, \f^{[5]}_2\bigr)(\gamma)=\left(\begin{smallmatrix}a_0&b_0\\c_0&d_0\end{smallmatrix}\right)$.
More explicitly, the~$(m, n)$ entry of the matrix~$A_\gamma$ is given by
$\sum_{i+j=m+1-n}\binom{m+1-n}{j}\binom{n}{i}a_0^j\,b_0^i\,c_0^i\,d_0^{n-i}$.
Then the representation~$A_\gamma$ of~$\gamma\in\G(5)$ associated to~$\VF$ of~$\G(5)$ is the symmetric tensor 
representation~$\Sym^{m+1}(\pi_5(\f^{[5]}_1, \f^{[5]}_2))$ of degree~$1+m$. 

We now study the monic MLDE  which is associated to the vector-valued modular form~$\VF$. A short calculation shows
$\bigl(\f^{[5]}_2\bigr)^{5k-\ell}\bigl(f^{[5]}_1\bigr)^{\ell}=e(\tau)^{\ell/5}+O\left(e(\tau)^{(\ell+1)/5}\right)$ for any~$0\leq\ell\leq5k$. 
Since~$\dim M_k (\Gamma(5),v_5^{1+5k})=1+5k$ for any~$k\in\frac{1}{5}\Z_{\geq0}$ and t is proved in Lemma~1.7 in~\cite{I} that~$\dim  M_k(\Gamma(5))=1+5k$ for any~$k\in\frac{1}{5}\Z_{>0}$.
{(31)~the leading exponents 
of~$\bigl(\f^{[5]}_2\bigr)^{5k-\ell}\bigl(\f^{[5]}_1\bigr)^\ell$ are~$\ell/5$}, the sum of $\ell/5$  multiplied by~12 is equal to 
\begin{equation*}
12\sum_{\ell=0}^{5k}\dfrac{\ell}{5}=6k(5k+1)\=d(d+k-1)\,,\quad d\=1+5k\,.
\end{equation*}
Therefore there exists some monic MLDE whose solution space is equal to the space~$M_k(\Gamma(5), v_5^{1+5k})$.  Hence   we have

\begin{theorem} 
Let~$m$ is~$k=\frac{1}{5}\Z_{>}0$. There exists a monic modular linear differential equation of weight~$k$ and order~$1+5k$ 
on $\MG$, whose solution space coincides with the space of modular forms of weight~$k$ with multipliers $v_5^{5k}$ on~$\G(5)$ for any~$k$. 
\end{theorem}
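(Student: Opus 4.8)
The plan is to exhibit $M_k(\G(5), v_5^{5k})$ as the span of the component functions of a single strictly normalized vector-valued modular form and then to invoke Proposition~\ref{t.exponents}. Writing $m = 5k$, I would start from the two basis elements $\f_1^{[5]}, \f_2^{[5]}$ of $M_{1/5}(\G(5), v_5)$ together with the representation $\pi_5$, and form the symmetric tensor power $\VF = {}^t(f_0, \dots, f_m)$ with $f_\ell = (\f_2^{[5]})^{m-\ell}(\f_1^{[5]})^\ell$ for $0 \le \ell \le m$. Each factor has weight $1/5$ and multiplier $v_5$, so every $f_\ell$ belongs to $M_k(\G(5), v_5^{5k})$, and (as already recorded above) $\VF$ transforms under the symmetric power of $\pi_5$; thus $\VF$ is a VVMF of weight $k$ with $m+1 = 5k+1$ components.

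Next I would read off the leading exponents from the expansions \eqref{E.f12}--\eqref{E.f13a}: the form $\f_2^{[5]}$ begins at $q^0$ while $\f_1^{[5]}$ begins at $q^{1/5}$, so $f_\ell$ has leading exponent $\lambda_\ell = \ell/5$. Since the values $\lambda_0, \dots, \lambda_{5k}$ are pairwise distinct, the $f_\ell$ are linearly independent and, after ordering the indices by decreasing exponent, $\VF$ is strictly normalized. I would then verify the numerical hypothesis of Proposition~\ref{t.exponents} with $n = 1+5k$ and weight $k$:
\[
n(n+k-1) = (1+5k)(6k) = 6k(1+5k), \qquad 12\sum_{\ell=0}^{5k}\lambda_\ell = \frac{12}{5}\cdot\frac{5k(5k+1)}{2} = 6k(5k+1),
\]
so that $n(n+k-1) - 12\sum_{\ell}\lambda_\ell = 0$. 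Proposition~\ref{t.exponents} then produces a monic modular linear differential equation of weight $k$ and order $1+5k$ whose solution space is exactly the span of $\{f_\ell\}_{\ell=0}^{5k}$.

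It remains to identify that span with $M_k(\G(5), v_5^{5k})$. The $5k+1$ linearly independent forms $f_\ell$ all lie in $M_k(\G(5), v_5^{5k})$, and $\dim M_k(\G(5)) = 1+5k$ by Lemma~1.7 of~\cite{I}; comparing dimensions forces $\{f_\ell\}$ to be a basis, so the solution space coincides with $M_k(\G(5), v_5^{5k})$, as desired.

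The main point requiring care is the verification of the hypotheses of Proposition~\ref{t.exponents}: one must confirm strict normalization (every component nonzero, all exponents distinct) and guard against logarithmic solutions, which could in principle arise because many differences $\lambda_\ell - \lambda_{\ell'} = (\ell-\ell')/5$ are integral. This danger is vacuous here, since $5k+1$ genuine holomorphic solutions of an order-$(5k+1)$ equation already exhaust the solution space and leave no room for a logarithmic one; equivalently, the strict normalization together with the numerical identity is precisely what Proposition~\ref{t.exponents} converts into the existence of the monic equation.
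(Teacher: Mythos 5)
Your proposal is correct and follows essentially the same route as the paper: both form the symmetric power components $f_\ell=(\f^{[5]}_2)^{5k-\ell}(\f^{[5]}_1)^{\ell}$, read off the leading exponents $\ell/5$, check Mason's criterion $n(n+k-1)-12\sum_\ell\lambda_\ell=0$ from Proposition~\ref{t.exponents}, and identify the span with $M_k(\G(5),v_5^{5k})$ via the dimension formula $\dim M_k(\G(5))=1+5k$ from Lemma~1.7 of~\cite{I}. Your explicit verification of strict normalization and linear independence is slightly more careful than the paper's terse treatment, but it is the same argument.
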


For the case of the characters of the minimal model of type~$(2,5)$,  the symmetric  representation and the MLDE are known (cf.~\cite{Milas}).
\section{Appendix}
We now complete the proof of Theorem~\ref{t.n}. The case $\ell=0$ was done already, and then we show the theorem for $0<\ell<12$.

\begin{table}[h]\label{table:np2}
\begin{center}
\begin{tabular}{c||c|c|c|c}
$\ell$&$1$&$2$&$3$&$4$\\ \hline\hline
$\frac{12(2r+3)}{\gcd(4,r)\gcd(3,r)}$&$12(24m+5)$&$6(24m+7)$&$4(24m+9)$&$12(24m+11)$\medskip\\ 
$\ell$&$5$&$6$&$7$&$8$\\ \hline\hline
$\frac{12(2r+3)}{\gcd(4,r)\gcd(3,r)}$
&$12(24m+13)$&$4(24m+15)$&$2(24m+17)$&$12(24m+19)$\medskip\\ 
$\ell$&$9$&$10$&$11$&$12\,(\ell=0)$\\ \hline\hline
$\frac{12(2r+3)}{\gcd(4,r)\gcd(3,r)}$&$4(24m+21$&$6(24m+23$&$12(24m+25)$&$12p$\\
\end{tabular}
\caption{The values of Theorem~\ref{t.n}}
\end{center}
\end{table}


\end{document}